\newtheorem{theorem}{Theorem}[section]
\newtheorem{lemma}[theorem]{Lemma}
\newtheorem{prop}[theorem]{Proposition}
\newtheorem{cor}[theorem]{Corollary}
\newtheorem{conj}[theorem]{Conjecture}
\thanks{  Partially supported by M. Rudelson's  NSF
 Grant  DMS-1464514,  and USAF Grant FA9550-14-1-0009.}
\begin{document}
\title[John Ellipsoid and the Center of Mass]{John 
Ellipsoid and the center of mass \\ of a convex body}
\date{}
\author{Han Huang}
\maketitle

\begin{abstract}

It is natural to ask whether the center of mass of a 
convex body $K\subset \mathbb{R}^n$ lies in its John 
ellipsoid $B_K$, i.e., in the maximal volume ellipsoid 
contained in $K$. This question is relevant to the 
efficiency of many algorithms for convex bodies. In 
this paper, we obtain an unexpected negative result. 
There exists a convex body $K\subset \mathbb{R}^n$ such 
that its center of mass does not lie in the  John 
ellipsoid $B_K$  inflated $(1-C\sqrt{\frac{\log(n)}
{n}})n$ times about the center of $B_K$. Moreover, 
there exists a polytope $P \subset
\mathbb{R}^n$ with $O(n^2)$ facets whose center of mass 
is not contained in the John ellipsoid  $B_P$ inflated 
$O(\frac{n}{\log(n)})$ times about the center of $B_P$.
\end{abstract}

\begin{flushleft}

\section{Introduction}
Recall that the John ellipsoid $B_K$ of a convex 
body $K\subset \mathbb{R}^n$ is the maximal volume 
ellipsoid contained in $K$. A natural question
asked by  S.~Vempala is whether the center of mass of 
$K$  lies in a small dilation of its John ellipsoid. 
The importance of this question stems from its relation 
to the efficiency of algorithms for convex bodies. The
efficiency of many such algorithms  depends on the 
"roundness" of the body. This can be measured in two 
ways:
\begin{enumerate}

\item the traditional way, as the ratio of the radii of 
the circumscribed to the inscribed ball;

\item as the ratio of the radii of the smallest ball 
that contains the most points (say 1/2 of the volume) to the 
inscribed ball.

\end{enumerate}
   For instance, the complexity of sampling algorithms 
grows quadratically with the latter ratio. Thus, a 
common pre-processing step is to find a good rounding--in
other words, find an ellipsoid for which this 
ratio is reasonably small and then map it to the unit 
ball using an affine transformation. This can be done 
in a randomized polynomial time algorithm by estimating 
the inertia ellipsoid (defined by the covariance matrix 
of a uniform random point from $K$), wherein complexity 
depends logarithmically on the initial ratio of the 
radii, but as a large degree polynomial on the 
dimension. The other possible candidate is the John 
ellipsoid. This ellipsoid is difficult to construct in 
general, but for explicit polytopes, a simple iterative 
algorithm identifies the inscribed ellipsoid of the 
maximal volume quite efficiently. This algorithm was 
developed by L.~G.~Khachiyan \cite{mie}. Recently, 
Y.~Lee and A.~Sidford have provided a faster algorithm 
\cite{Efficient Inverse Maintenance and Faster 
Algorithms for Linear Programming. }. In contrast to 
the inertia ellipsoid, whose construction requires 
sampling, the John ellipsoid is constructed 
deterministically. The John ellipsoid can be used to 
reduce the ratio (1) but it can be as large as $n$, 
which is the dimension of the body. On the other hand, 
the inertia ellipsoid yields the bound $O(\sqrt{n})$ 
for the ratio (2). This raises a question: Does the 
John ellipsoid also provide a good bound for the ratio 
(2)? In other words, one can write it as the following 
conjecture:
   
\begin{conj} \label{conj1}
	For any convex body $K$ in $\mathbb{R}^n$, the John 
	ellipsoid of $K$ scaled by a factor of $O(\sqrt{n})$ 
	about the ellipsoid's center will contain at least 
	half of the volume of $K$.
\end{conj}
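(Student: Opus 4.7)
The plan is to attempt a direct proof after normalizing the setup, with the expectation that the main difficulty will lie in ruling out pathological volume distributions far from the John center. By affine invariance of the statement, I may assume $B_K = B_2^n$, the unit Euclidean ball at the origin, so that John's theorem gives $B_2^n \subseteq K \subseteq n B_2^n$. The conjecture then reduces to producing an absolute constant $C$ such that $|K \cap C\sqrt{n}\,B_2^n| \geq |K|/2$.

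The natural first attack is to pass through a second moment bound: letting $X$ be uniformly distributed on $K$, if $\mathbb{E}|X|^2 \leq Cn$ then Markov's inequality finishes the job. To control $\mathbb{E}|X|^2$ I would use Brunn's concavity principle on one-dimensional marginals: for each unit vector $\theta$, the function $s \mapsto |K \cap \{\langle x, \theta\rangle = s\}|^{1/(n-1)}$ is concave on $[-h_K(-\theta), h_K(\theta)]$, and the inclusion $B_2^n \subseteq K$ forces the central slices to be comparably large, while $K \subseteq n B_2^n$ caps the tails. Summing the resulting directional estimates over an orthonormal basis, and possibly re-weighting via John's identity $\sum_i c_i u_i \otimes u_i = I_n$ supported on contact points $u_i \in \partial K \cap S^{n-1}$, should yield an upper bound on $\mathbb{E}|X|^2$.

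The main obstacle is that the width $h_K(\theta)$ can saturate near $n$ in many directions simultaneously, so the crude slice argument only delivers $\mathbb{E}\langle X,\theta\rangle^2 \lesssim n$ per direction and hence $\mathbb{E}|X|^2 \lesssim n^2$, giving a median radius of order $n$ rather than $\sqrt{n}$. Recovering a square-root saving would require arguing that substantial mass of $K$ cannot simultaneously pile up near many mutually orthogonal extremal hyperplanes. John's theorem constrains only the contact \emph{points}, not the distribution of volume near them, so I do not see a mechanism within the classical framework for such a saving. Anticipating this obstruction to be genuine, I would in parallel try to construct a convex body with many long, thin protrusions whose combined volume concentrates near the extremal radius, which would refute the conjecture --- precisely matching the negative conclusion announced in the abstract.
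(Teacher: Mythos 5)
The statement you are addressing is a conjecture that the paper \emph{disproves} (Theorem \ref{main theorem} and Corollary \ref{main cor}), so there is no affirmative proof in the paper to compare against. Your proposal correctly diagnoses this: you attempt the natural positive argument, identify an obstruction, and conclude that a counterexample should exist.

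Your second-moment heuristic tracks the paper's Lemma \ref{second moment and center of mass}, which gives $|x_K|^2 \le \mathbb{E}|X|^2 \le C_{10}|x_K|^2 + C_{11}n$ for $K$ in John's position. Since John's theorem only forces $|x_K| \le n$, the a priori bound on $\mathbb{E}|X|^2$ is of order $n^2$, a factor of $n$ off from what Markov would need, and the paper's examples show this gap is genuine. (Your intermediate claim $\mathbb{E}\langle X,\theta\rangle^2 \lesssim n$ per direction is not quite right --- along a single direction this can reach order $n^2$ --- but the aggregate conclusion $\mathbb{E}|X|^2 \lesssim n^2$ is correct and is exactly what the lemma formalizes.)

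However, your proposed counterexample picture --- ``many long, thin protrusions whose combined volume concentrates near the extremal radius'' --- is not the right mechanism and it is not clear it could be made to work. Thin spikes carry exponentially small volume in high dimension, and a symmetric arrangement of spikes that is compatible with the John balance condition $\sum c_i u_i = 0$ tends to keep the center of mass near the John center by symmetry. The paper instead exploits a single cone-shaped body: a cone of height $n+1$ in $\mathbb{R}^n$ has its center of mass at distance $n$ from the apex, because the slice volume grows like $t^{n-1}$ towards the base. Concretely, the paper takes $K = Q \cap L$, where $Q$ is a pyramid with apex at $ne_n$ whose contact points place it in John's position (Proposition \ref{CJP} and Theorem \ref{JP}), and $L$ is a body with cross-section radius $2 + t/n$ expanding towards $t = n$. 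In most directions $\theta$ the cross-section of $Q$ is far wider than that of $L$, so $K$ inherits the expanding shape of $L$, and the factor $(2 + t/n)^{n-1}$ pushes the center of mass out to $t \approx n - C\sqrt{n\log n}$. The delicate technical step, which your proposal does not reach, is bounding the exceptional set of directions in which $Q$ truncates $L$ too early, using the spherical cap estimate of Proposition \ref{SphericalCapBound}.
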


This can be formulated in terms of the center of mass. We 
will show in Section 4 that Conjecture \ref{conj1} is 
equivalent  to the following conjecture:

\begin{conj} \label{conj2}
	For any convex body $K$ in $\mathbb{R}^n$, the John 
	ellipsoid of $K$ scaled by a factor of $O(\sqrt{n})$ 
	about the ellipsoid's center will contain the center 
	of mass of $K$.
\end{conj}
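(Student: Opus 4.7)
The plan is to normalize via John's theorem so that $B_K = B_2^n$ is the standard Euclidean unit ball centered at the origin; the classical John theorem then forces $K \subset nB_2^n$. Writing $p$ for the center of mass of $K$, by the rotational symmetry of $B_2^n$ it is enough to bound $\langle p, u\rangle$ for a single worst direction $u$, which after rotation we may take to be $e_1$. The goal thus reduces to showing $|p_1| \leq C\sqrt{n}$, where $p_1 = \langle p, e_1\rangle$.

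My first concrete step would be to pass to a one-dimensional problem via the slicing function $V(t) = \mathrm{vol}_{n-1}(K\cap\{x_1 = t\})$. Brunn's theorem gives that $V^{1/(n-1)}$ is concave on a support contained in $[-n,n]$, and the inclusion $B_2^n \subset K$ yields the lower envelope $V(t) \geq \omega_{n-1}(1-t^2)^{(n-1)/2}$ on $[-1,1]$. Since $p_1 = \int tV(t)\,dt\big/\int V(t)\,dt$, one can try a rearrangement argument: among all concave profiles with the prescribed lower envelope near zero and vanishing by $t = n$, the extremizer is a ``tent'' function whose centroid is explicitly computable. Pushing this through should control the positive tail of the cap against the unit-ball core and, optimistically, produce $p_1 \leq C\sqrt{n}$.

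The second ingredient has to be the John equilibrium decomposition: contact points $u_1,\ldots,u_m \in S^{n-1}\cap \partial K$ with weights $c_i > 0$ satisfying $\sum c_i u_i = 0$ and $\sum c_i u_i u_i^{\top} = I_n$. Each $u_i$ provides a supporting half-space $\langle x, u_i\rangle \leq 1$, and the isotropy identity says that the measure $c_i/\sum c_j$ on the contact points ``sees'' every direction equally. The plan would be to average the cap-volume bound $V_\theta(t) \leq V_\theta(0)(1-t/n)^{n-1}$ over directions $\theta$ drawn according to this measure, thereby upgrading the crude one-dimensional estimate into a genuinely multidimensional constraint that confines the centroid to a ball of radius $\sqrt{n}$.

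The main obstacle, and where I expect the argument to collapse, is precisely this last step. The inputs $B_2^n \subset K \subset nB_2^n$ alone are not sharp enough: the cleanest one-dimensional calculation only delivers $|p_1| \leq Cn$, and extracting the missing factor $\sqrt{n}$ forces one to convert the isotropy identity $\sum c_i u_i u_i^\top = I_n$ into centroid information in a very lossless way. The negative result announced in the abstract indicates that this conversion must fail in general: the counterexample to be constructed later presumably exhibits a body whose contact points respect the isotropy identity yet permit most of the volume of $K$ to concentrate near a single antipode of $B_K$. I therefore expect any proof along the lines above to founder precisely when one tries to combine the half-space bounds into a sub-linear centroid estimate.
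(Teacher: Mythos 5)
There is a fundamental problem: the statement you are trying to prove is Conjecture \ref{conj2}, and the paper does not prove it --- it \emph{refutes} it. Theorem \ref{main theorem John's position}(1) constructs a convex body $K=Q\cap L$ in John's position (so $B_K=B_2^n$) whose center of mass satisfies $|x_K|\ge (1-C_0\sqrt{\log(n)/n})\,n$, which is asymptotically the trivial bound $|x_K|\le n$ coming from $K\subset nB_2^n$. Hence no proof of an $O(\sqrt{n})$ bound can exist, and the step you yourself flag as the likely failure point --- converting the John decomposition $\sum c_iu_i=0$, $\sum c_iu_i\otimes u_i=I_n$ into a sublinear centroid estimate --- is exactly where any such argument must break. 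The paper's $Q$ is a cone over a cube whose contact points are $(\pm\sqrt{1-1/n^2}\,e_i,\,1/n)$ together with $(\vec 0,-1)$, so the isotropy conditions are perfectly compatible with a body whose cross-sectional volume grows like $(2+t/n)^{n-1}$ up to height $t\approx n-O(\sqrt{n\log n})$; after intersecting with the slowly widening body $L$ the mass concentrates near the apex end, and the centroid sits at distance nearly $n$ from the origin. So the isotropic averaging of half-space constraints simply does not ``see'' this concentration, and no amount of care in that step recovers a factor $\sqrt{n}$.

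A secondary, technical point: your slicing inequality is stated with the wrong direction. If $V_\theta(t)$ is the $(n-1)$-volume of the slice and $V_\theta^{1/(n-1)}$ is concave with right endpoint $b\le n$, Brunn's theorem gives the \emph{lower} bound $V_\theta(t)\ge V_\theta(0)(1-t/b)^{n-1}$ for $t\in[0,b]$, not an upper bound; the only generic upper bound available from $B_2^n\subset K$ is $V_\theta(t)\le V_\theta(0)(1+t)^{n-1}$, obtained by extending the chord through the slices at $t$ and at some $a\le -1$. That upper envelope is precisely the profile of a cone with apex at height $-1$ opening toward height $n$, whose centroid along the axis is at $t=n-1$; this is the shape the paper's counterexample emulates in John's position. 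So even the one-dimensional reduction, done correctly, caps out at a bound of order $n$, which the counterexample shows is essentially sharp up to the $\sqrt{n\log n}$ correction.
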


The main result of this paper is:


\begin{theorem} \label{main theorem}

	For a sufficiently large $n \in \mathbb{N}$,
    \begin{enumerate}
    \item There exists a convex body 
    $K\subset\mathbb{R}^n$ such that its center of mass 
    does not lie in the John ellipsoid scaled by a 
    factor of $(1-C_0\sqrt{\frac{\log(n)}{n}})n$ about 
    the ellipsoid's center, where $C_0>0$ is a 
    universal constant.
    
    \item There exists a polytope $P\subset \mathbb{R}
	 ^n$ with $O(n^2)$ facets such that its center of 
	 mass does not lie in the John ellipsoid scaled by a 
	 factor of  $C_1\frac{n}{\log(n)}$ about the 
	 ellipsoid's center, where $C_1>0$ is a universal 
	 constant.
    \end{enumerate}
\end{theorem}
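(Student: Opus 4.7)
The overall strategy is to prove both parts by explicit constructions that nearly saturate John's inequality. By affine normalization, I may assume the John ellipsoid $B_K$ equals the unit ball $B^n$ centered at the origin; John's theorem then gives $K\subset nB^n$, so the centroid of $K$ lies in $nB^n$ and the task reduces to constructing $K$ whose centroid lies near the boundary of $nB^n$. Since equality in John's inequality holds only for the regular simplex circumscribing $B^n$ (vertices at distance $n$), which has centroid at the origin, the construction must be a near-simplicial body whose mass has been shifted asymmetrically to one side.

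For part (1), I would start from the regular simplex $\Delta=\{x:\langle u_j,x\rangle\le 1,\ j=0,\ldots,n\}$ (with $|u_j|=1$, $\sum u_j=0$, $\langle u_j,u_k\rangle=-1/n$, and opposite vertex $v_j=-nu_j$) and modify it by truncating the $n$ vertices $v_1,\ldots,v_n$ opposite to $v_0$ with half-spaces $\langle u_j,x\rangle\ge -t$ at a common depth $t\ge 1$. I would verify via John's characterization that $B^n$ remains the John ellipsoid: the original tangent points $u_0,\ldots,u_n$ supply a valid decomposition $\sum c_ju_j=0$, $\sum c_ju_ju_j^T=I$, and the new contact points $-u_j$ (for $j\ge 1$, at $t=1$) can be incorporated with compatible positive weights. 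The centroid would then be computed by integration in barycentric coordinates $\beta_j=(1-\langle u_j,x\rangle)/(n+1)$, which convert the truncated body into the Dirichlet region $\{\beta\in\Delta_{n+1}:\beta_j\le(1+t)/(n+1),\ j\ge 1\}$.

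The main obstacle is obtaining the sharp factor $(1-C_0\sqrt{\log n/n})n$: a uniform truncation at $t=1$ produces only a centroid shift of order $\sqrt{n}$ by standard Dirichlet concentration, far short of the required order $n$. This strongly suggests a more refined construction, perhaps with non-uniform asymmetric modification of the simplex or a different near-extremal body altogether. The characteristic $\sqrt{\log n/n}$ error term points to a union-bound or net argument on $S^{n-1}$, and the actual construction likely uses a $\mathrm{poly}(n)$-sized family of directions to sharpen the asymmetry. Executing the resulting centroid computation, presumably via concentration of measure for high-dimensional log-concave distributions, will be the technical core of the argument.

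For part (2), I would approximate the body from part (1) by a polytope with $O(n^2)$ facets, obtained by intersecting $\Delta$ with $O(n^2)$ carefully chosen hyperplanes--drawn either from a spherical code or at random with high probability--so that $B^n$ remains the John ellipsoid and volume stays concentrated on one side. The factor-of-$\log n$ loss from $(1-o(1))n$ down to $n/\log n$ arises because $O(n^2)$ unit normals can only cover $S^{n-1}$ with angular resolution of order $\sqrt{\log n/n}$, which is precisely the gap between the smooth extremal body of part (1) and what a polynomially-faceted polytope can reproduce.
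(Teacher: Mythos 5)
Your high-level reduction (pass to John position, try to push the centroid near the boundary of $nB_2^n$) is sound, and you correctly diagnose the central difficulty: a uniform truncation of the regular simplex only shifts the centroid by $O(\sqrt n)$, nowhere near the required $\Theta(n)$. But the proposal stops exactly where the real work begins, and the fix you gesture at (``non-uniform modification of the simplex,'' ``$\mathrm{poly}(n)$-sized family of directions'') does not capture the mechanism the paper actually uses. The simplex is the wrong starting point: for a simplex in John position the mass is already balanced, and any perturbation small enough to keep $B_2^n$ the maximal inscribed ellipsoid produces only a lower-order centroid shift. The paper instead starts from a \emph{cone} $Q$ built from a completely different contact-point decomposition (Proposition~\ref{CJP}): take $u_i = \pm e_i$ in $\mathbb R^{n-1}$, lift to $v_i = (\sqrt{1-1/n^2}\,u_i,\,1/n)$ together with $v_0=(\vec 0,-1)$, and one checks $\sum c_i' v_i\otimes v_i = I_n$, $\sum c_i' v_i = 0$. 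The resulting $Q$ is a cone with cross-sections $Q_t = \frac{n-t}{\sqrt{n^2-1}} B_\infty^{n-1}$, base at $t=-1$, apex at $ne_n$, and it is in John position. The crucial second ingredient, which your proposal lacks, is to \emph{intersect} $Q$ with a thin frustum $L = \{(y,t): y\in(2+t/n)B_2^{n-1},\, t\in[-1,n]\}$ whose cross-sections \emph{grow} with $t$. For $t$ below a threshold $R_0 = n - \tfrac{C_0}{2}\sqrt{n\log n}$ and for all but an $e^{-\log n}$-fraction of directions $\theta\in S^{n-2}$ (this is where the spherical cap bound and the $\sqrt{\log n/n}$ enter), one has $L_t\subset Q_t$ and hence $K_t = L_t$; the volume of $L_t$ scales like $(2+t/n)^{n-1}$, which concentrates essentially all of $\mathrm{vol}(K)$ near $t\approx R_0\approx n$, so $t_K\gtrsim n - C_0\sqrt{n\log n}$. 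This ``mass concentration near the apex of a cone whose John ellipsoid sits near the base'' is the idea that produces a $\Theta(n)$ shift, and it is not a refinement of simplex truncation.

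For part (2) your intuition that $O(n^2)$ facets force a $\log n$-resolution loss is in the right spirit, but the paper's mechanism is different and more specific: it replaces the round frustum $L$ by a \emph{cylinder} $L_2$ cut out by $4n(n-1)$ half-spaces with normals $\pm(1-\epsilon_n)e_i \pm \sqrt{1-(1-\epsilon_n)^2}\,e_j$, and then sets $\epsilon_n\asymp 1/\log n$. The constraint $\epsilon_n\lesssim 1/\log n$ comes from a union bound over $n$ spherical caps requiring $n\exp(-C/\epsilon_n)$ to be small, and the resulting centroid bound is $|x_P|\gtrsim \epsilon_n n \asymp n/\log n$. So the $\log n$ loss comes from balancing the cap measure against the number of coordinate directions, not from a covering-number argument on $S^{n-1}$. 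Concretely, before the proposal could be turned into a proof you would need to (a) abandon the simplex in favor of the cone $Q$ (or something with the same ``heavy base, thin apex'' structure together with a valid John decomposition), (b) introduce the intersecting body $L$ (resp.\ $L_2$) that forces exponential concentration of the cross-sectional volume near the apex, and (c) carry out the radial-function integral (\ref{MainArgument}) with the spherical concentration estimates to control the exceptional set of directions.
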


Remark: It is well known that for any convex body 
$K\subset \mathbb{R}^n$, the John ellipsoid of $K$
scaled by a factor $n$ about the ellipsoid's center 
contains the original body $K$. (\cite{John}) 

Thus, the example in Theorem \ref{main theorem}(1) is 
the asymptotically optimal in the sense that $
\lim_{n\rightarrow +\infty}\frac{(1-
C_0\sqrt{\frac{\log(n)}{n}})n}{n}=1$.

A consequence of this theorem is the following:
\begin{cor} \label{main cor}
	For a sufficiently large $n \in \mathbb{N}$,
    \begin{enumerate}
    
    \item There exists a convex body $K\subset 
    \mathbb{R}^n$ such that the center of its John 
    ellipsoid $B_K$ is $0$ and $$\text{vol}((1-
    C_0'\sqrt{\frac{\log(n)}{n}})n B_K\cap K) \le 
    \frac{1}{2}\mbox{vol}(K),$$ where $C_0'>0$ is a 
    universal constant.
    
    \item There exists a polytope $P\subset \mathbb{R}
    ^n$ with $O(n^2)$ facets such that the center of 
    its John ellipsoid $B_P$ is $0$ and  $$\mbox{vol}
    (C_1'\frac{n}{\log(n)}B_P\cap P)\le \frac{1}
    {2}\mbox{vol}(P),$$ where $C_1'>0$ is a universal 
    constant.

\end{enumerate}
\end{cor}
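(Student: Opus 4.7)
The plan is to deduce Corollary~\ref{main cor} from Theorem~\ref{main theorem} by converting the centroid-location statement into a mass-distribution statement. Let $K$ (resp. $P$) be the body produced by Theorem~\ref{main theorem}, $B$ its John ellipsoid (centered at the origin by hypothesis), and $\|\cdot\|_B$ the Minkowski functional of $B$, so $sB=\{x:\|x\|_B\le s\}$. John's theorem gives $\|x\|_B\le n$ for every $x\in K$, and Jensen's inequality applied to the norm yields
\[
t<\|\bar g\|_B=\|\mathbb{E}_X X\|_B\le \mathbb{E}_X\|X\|_B,
\]
where $X$ is uniform on $K$ and $t$ is the dilation factor from the theorem. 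Writing $Y=\|X\|_B\in[0,n]$, we have $\mathbb{E}Y>t$.

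For part (1) I would apply Markov's inequality to the nonnegative variable $n-Y$: since $\mathbb{E}(n-Y)\le n-t$,
\[
\frac{\mathrm{vol}(K\cap t'B)}{\mathrm{vol}(K)}=\Pr(Y\le t')=\Pr(n-Y\ge n-t')\le\frac{n-t}{n-t'},
\]
which is at most $1/2$ precisely when $t'\le 2t-n$. Substituting $t=(1-C_0\sqrt{\log n/n})n$ gives $t'=(1-2C_0\sqrt{\log n/n})n$, so setting $C_0':=2C_0$ finishes part (1).

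For part (2), with $t=C_1 n/\log n\ll n$ the preceding Markov bound degenerates since $2t-n<0$. Here I would instead invoke Gr\"unbaum's inequality: because $\bar g\notin tB_P$, a hyperplane $H$ strictly separates $\bar g$ from $tB_P$, and the centroid-side half-space carries mass at least $(n/(n+1))^n\ge 1/e$, yielding $\mathrm{vol}(tB_P\cap P)\le(1-1/e)\mathrm{vol}(P)$. To improve this from $1-1/e$ down to $1/2$ I would slightly shrink the dilation to $t'=C_1'n/\log n$ with $C_1'<C_1$ and exploit log-concavity (Brunn--Minkowski) of the one-dimensional marginal of $P$ in the separating direction to bound the gap between its mean and median, replacing Gr\"unbaum's mean-based estimate with a median-based one.

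The routine step is part (1). The principal obstacle is (2): the standard mean-to-median estimate for log-concave distributions requires the separation margin $\langle\bar g,u\rangle-th_{B_P}(u)$ to dominate the variance of the one-dimensional marginal, and since this margin can a priori be small compared to the variance, carrying out the reduction from $1-1/e$ to $1/2$ cleanly likely requires either a sharpened quantitative Gr\"unbaum-type lemma or direct use of the explicit structure of the polytope built in Theorem~\ref{main theorem}(2).
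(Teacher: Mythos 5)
Your part (1) is correct and genuinely simpler than the paper's argument. The paper handles (1) by reducing to the half-space estimate $\text{vol}(K\cap\{\langle x,e_n\rangle\le R\})\le\frac12\text{vol}(K)$ and then re-running the integral computation from Theorem~\ref{main theorem John's position}(1) with $\text{sign}(t-R)$ in place of $(t-R)$. Your Markov route bypasses all of that: it uses only the two facts $K\subset nB_K$ (John) and $\|x_K\|_{B_K}\ge t$ (the theorem), together with Jensen for $\|\cdot\|_{B_K}$, and the arithmetic checks out — $\Pr(\|X\|_{B_K}\le t')\le\frac{n-t}{n-t'}=\frac{C_0}{C_0'}$, so $C_0'=2C_0$ works. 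It is worth noting why this trick is tied to part (1): it produces a nontrivial bound only because $t$ is within $O(\sqrt{n\log n})$ of the a priori maximum $n$, so the tail $n-\|X\|_{B_K}$ has small mean.

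Part (2) has a real gap, which you yourself flag. Gr\"unbaum gives only $1-1/e$, and your proposed repair — shrink the dilate and use log-concavity of the one-dimensional marginal $\langle X,u\rangle$ to move from mean to median — requires the separating margin $|x_P|-t'\asymp n/\log n$ to dominate a constant multiple of the standard deviation of $\langle X,u\rangle$. Nothing in the construction or in John's position gives you control on that standard deviation in the particular direction $u=x_P/|x_P|$; it could a priori be of order $n$, in which case the mean-to-median correction swamps your margin. The paper sidesteps directional marginals entirely: it compares the median $M_P$ of the radial variable $|X|$ with $(\mathbb{E}|X|^2)^{1/2}$ via Lemma~\ref{moment and expectation} (Borell plus Latala's Theorem~\ref{logconcave 2}), and combines this with the trivial Jensen bound $|x_P|\le(\mathbb{E}|X|^2)^{1/2}$ to get $M_P\ge|x_P|/C_9\ge(C_1/C_9)\,n/\log n$, which is exactly the statement of Corollary~\ref{main cor}(2) with $C_1'=C_1/C_9$. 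That median-versus-second-moment comparison for the global radial variable is the missing ingredient in your sketch; without it (or an equivalent), the Gr\"unbaum route does not close.
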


Thus, Conjecture \ref{conj1} and Conjecture \ref{conj2}
are not true due to Theorem \ref{main theorem} and 
Corollary \ref{main cor}. In particular, both 
conjectures will not hold even if one restricts the 
collection of convex bodies to polytopes with $O(n^2)$ 
facets.\\

This paper is structured as follows. Section 2 examines 
the notation and necessary background for the proof of 
the main theorem. The proof of the main theorem is
presented in Section 3. Corollary \ref{main cor} and 
the relation between Conjecture \ref{conj1} and 
\ref{conj2} are examined in Section 4.


\section{Notations and Prelimimaries}

Let $B_2^n$ denote the unit Euclidean ball in $
\mathbb{R}^n$ and $|\cdot|$ denote the Euclidean norm. 
Let $\{e_i\}_{i=1}^n$ be the standard orthonormal basis 
for $\mathbb{R}^n$. For any $x\in \mathbb{R}^n$, let 
$x_i$ denote its i-th coefficient.\\

A subset of $\mathbb{R}^n$ is called a convex body if 
it is a convex, compact set that has a non-empty 
interior. For a subset $A\subset \mathbb{R}^n$, let 
$1_A$ denote the indicator function of $A$. 
  
For a convex body $K\subset \mathbb{R}^n$, $\text{vol}
(K):=\int_{\mathbb{R}^n} 1_{K}(x)dx$, where the 
integral is the standard Lebesgue integration on $\mathbb{R}^n$. \\

An ellipsoid $B_K$ is the John's ellipsoid of $K$ if 
$B_K\subset K$; for any other ellipsoid $
\mathcal{E}\subset K$, $\text{vol}(B_K)\ge\text{vol}
(\mathcal{E})$. It is known that $B_K$ exists and is 
unique.(\cite{John})

A convex body $K$ is in John's position if the John 
ellipsoid of $K$ is $B_2^n$. For any convex body 
$K\subset \mathbb{R}^n$, there exists an affine 
transformation $T$ such that $TK$ is in John's 
position.

If a convex body $K$ contains $0$, we can define its 
radial function $\rho: S^{n-1} \rightarrow \mathbb{R}_+
$ by

$$
  \forall \theta \in S^{n-1} \, , \, \rho(\theta)= 
  \max \{ t>0 \, , \,t\theta \in K\}.
$$ 
  
The center of mass of a convex body $K$ is defined by 
$x_K:=\frac{1}{\text{vol}(K)}\int_{K}xdx$.  
  
Let $\sigma_{n-1}$ denote the normalized 
Haar measure on $S^{n-1}$. An absolutely continuous 
measure $\mu$ with density function $\frac{d\mu(x)}{dx}
=f(x)$ on $\mathbb{R}^n$ is called log-concave if, for any $\lambda \in (0,1)$ and $x,y \in 
\mathbb{R}^n$, we have

\begin{equation}
\label{logconcave}
 	f(\lambda x+(1-\lambda) y)\ge f(x)^{\lambda}f(y)^{1-
 	\lambda}.
\end{equation}

In this paper, we are interested in a specific class of 
log-concave measures: For any convex body $K$, the 
probability uniformly distributed in $K$,
$\frac{1_{K}}{\text{vol}(K)}dx$, is a log-concave 
probability measure. This is due to the fact that indicator
functions of convex sets automatically satisfied the inequality 
(\ref{logconcave}).\\

For matrices, let $\text{Tr}(M)$ denote the trace of a 
square matrix $M$ and $I_n$ denote the identity matrix 
on $\mathbb{R}^n$.\\
\vspace{.1in}

Let $\mathbb{P}$ denote the probability and $\mathbb{E}
$ denote the expectation. For the standard definition 
of terms in probability, we refer to E.~Çınlar's book. 
(\cite{Probability})


\subsection{John's Decomposition}

Let $K\subset \mathbb{R}^n$ be a convex body in 
John's position. A point $u\in \mathbb{R}^n$ is a 
contact point of $K$ and $B_2^n$ if $u \in \partial K 
\cap \partial B_2^n$. A classical theorem of F. John 
provides a decomposition of identity in terms of 
contact points.(\cite[p 52]{AGA})


\begin{theorem} \label{JP}
	Let $K$ be a convex body in $\mathbb{R}^n$ that 
	contains $B_2^n$. Then, $K$ is in John's position if 
	and only if there exist contact points $u_1,..,u_m$ 
	and $c_1,..,c_m>0$ such that

	\begin{enumerate}
    \item $\sum_{i=1}^m c_i u_i \otimes u_i=I_n$, and
    \item $\sum_{i=1}^m c_i u_i =\vec{0}$.
    \end{enumerate}

\end{theorem}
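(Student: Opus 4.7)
The plan is to treat the two implications separately. The ``if'' direction is a direct trace and AM--GM computation, while the ``only if'' direction is a Lagrange multiplier argument at the maximum volume ellipsoid.

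For the ``if'' direction, assume a decomposition $\sum c_i u_i \otimes u_i = I_n$, $\sum c_i u_i = 0$ with $c_i > 0$ and $u_i \in \partial K \cap S^{n-1}$ is given. Any ellipsoid $\mathcal{E} \subseteq K$ can, after absorbing an orthogonal factor, be written as $\mathcal{E} = a + T(B_2^n)$ with $T$ symmetric positive definite, and $\text{vol}(\mathcal{E}) = \det(T) \cdot \text{vol}(B_2^n)$. Because each $u_i$ is a contact point, $\{x : \langle x, u_i\rangle = 1\}$ supports $K$, so $\mathcal{E} \subseteq K$ forces $\|T u_i\| + \langle a, u_i\rangle \leq 1$ at each $i$. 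Using $\|Tu_i\| \geq \langle T u_i, u_i\rangle$ and summing against the $c_i$, conditions (1) and (2) give $\text{Tr}(T) \leq \sum c_i = \text{Tr}(I_n) = n$. AM--GM then yields $\det T \leq 1$, so $\text{vol}(\mathcal{E}) \leq \text{vol}(B_2^n)$, showing that $B_2^n$ is the John ellipsoid.

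For the ``only if'' direction, parameterize candidate ellipsoids by $(a,T)$ with $T$ symmetric positive definite, and maximize $\log \det T$ subject to the infinitely many containment constraints $\|T v\| + \langle a, v\rangle \leq h_K(v)$ for $v \in S^{n-1}$, where $h_K$ is the support function of $K$. Under the hypothesis that $K$ is in John's position, the maximizer is $(0, I_n)$, and the constraints active at this maximum are precisely those indexed by contact points. A KKT argument then yields nonnegative multipliers supported on the (closed) set of contact points; the stationarity conditions, using the computations $\nabla_T \log \det T\big|_{T=I_n} = I_n$, $\nabla_T \|T v\|\big|_{T=I_n} = v \otimes v$, and $\nabla_a \langle a, v\rangle = v$, translate exactly into identities (1) and (2) of the theorem. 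Carath\'eodory's theorem applied to these affine equations in $\mathrm{Sym}_n \times \mathbb{R}^n$ then reduces the support to a finite set of at most $\frac{n(n+3)}{2} + 1$ contact points.

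The main obstacle is making the KKT / Lagrange multiplier argument rigorous in this semi-infinite setting: one must verify constraint qualification, invoke an appropriate duality theorem for convex optimization with a compactly-indexed family of constraints, and argue that the multipliers can be realized as a finite positive measure on the compact set of contact points. Once this framework is in place, the remaining work is just gradient bookkeeping together with an application of Carath\'eodory to pass from a general positive measure to a finite convex combination.
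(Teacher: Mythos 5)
The paper does not actually prove Theorem~\ref{JP}; it is F.~John's classical decomposition theorem, cited directly from \cite[p.~52]{AGA} (see also \cite{John}) and used as a black box. So there is no ``paper's proof'' to compare against, and the relevant question is whether your blind proof stands on its own.

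Your ``if'' direction is correct and is the standard argument: polar-decompose an arbitrary competitor ellipsoid as $a + T(B_2^n)$ with $T$ symmetric positive definite, use the contact hyperplanes to get $\|Tu_i\| + \langle a, u_i\rangle \le 1$, bound $\langle Tu_i, u_i\rangle \le \|Tu_i\|$ by Cauchy--Schwarz, and sum with weights $c_i$. Conditions (1) and (2) turn this into $\operatorname{Tr}(T) \le \sum c_i = n$, and AM--GM on the eigenvalues gives $\det T \le 1$. This part is complete (modulo quoting uniqueness of the John ellipsoid, which the paper also takes as known).

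The ``only if'' direction, however, is only a plan, not a proof, and you say as much. The gap you name is real: passing from ``$(0, I_n)$ maximizes a concave objective over a semi-infinite family of constraints indexed by the compact set $S^{n-1}$'' to ``there exists a nonnegative multiplier measure supported on the active set'' requires a genuine duality theorem for semi-infinite convex programs (or an explicit minimax argument in $C(S^{n-1})^*$), together with a constraint-qualification check and the observation that complementary slackness forces the measure onto the contact points. None of this is routine bookkeeping. The classical route (e.g.\ Ball's survey, or the treatment in \cite{AGA}) gets the same conclusion more elementarily by contradiction: assume $I_n$ does not lie in the cone generated by $\{(u\otimes u, u)\}$ over contact points, use Hahn--Banach in the finite-dimensional space $\mathrm{Sym}_n \times \mathbb{R}^n$ to produce a symmetric $H$ with $\operatorname{Tr} H = 0$ and a vector $b$ such that $\langle Hu,u\rangle + \langle b,u\rangle < 0$ for every contact point $u$, and then show that $(I_n + \delta H)(B_2^n) + \delta b$ is contained in $K$ for small $\delta > 0$ and has strictly larger volume (since $\det(I_n + \delta H) = 1 + O(\delta^2)$), contradicting maximality. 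That argument delivers exactly the same stationarity information as your KKT sketch but is self-contained and uses only finite-dimensional separation plus compactness of the contact set. As a small point, if you do keep the duality route, the Carath\'eodory bound can be sharpened to $m \le \tfrac{n(n+3)}{2}$ rather than $\tfrac{n(n+3)}{2}+1$, because the points $(u\otimes u, u)$ all satisfy $\operatorname{Tr}(u\otimes u)=1$ and hence lie in an affine hyperplane; this is cosmetic here since Theorem~\ref{JP} as stated does not bound $m$.
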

	
Therefore, we can check from its contact points whether 
a convex body is in John's position.


\subsection{Measure Concentration on $S^{n-1}$}

Below we include two measure concentration inequalities 
on $S^{n-1}$.

The first inequality is the upper bound for the measure 
of a spherical cap (see, e.g., \cite[p86]{AGA}):

\begin{prop} \label{SphericalCapBound}
	Let $A_t=\{ \theta \in S^{n-1} \, , \, \theta_1>t\}
	$, then $\sigma_{n-1}(A_t)\le 2\exp(-C_3t^2n)$ where 
	$C_3>0$ is a universal constant.
\end{prop}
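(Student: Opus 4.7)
The plan is to reduce to Gaussian tails via the standard representation of the uniform measure on $S^{n-1}$ as a normalized Gaussian. Let $g=(g_1,\dots,g_n)$ be a standard Gaussian vector on $\mathbb{R}^n$; then $g/|g|$ is distributed according to $\sigma_{n-1}$, so
$$\sigma_{n-1}(A_t) \;=\; \mathbb{P}\!\left(\frac{g_1}{|g|} > t\right) \;=\; \mathbb{P}(g_1 > t\,|g|).$$
I may assume $0<t\le 1$, since otherwise $A_t$ is empty and the bound is trivial.

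The key step is to split according to whether $|g|$ is much smaller than its typical value $\sqrt{n}$:
$$\mathbb{P}(g_1 > t|g|) \;\le\; \mathbb{P}\!\left(|g|^2 < \tfrac{n}{2}\right) \;+\; \mathbb{P}\!\left(g_1 > t\sqrt{n/2}\right).$$
For the second term the standard Gaussian tail bound $\mathbb{P}(g_1>s)\le e^{-s^2/2}$ gives $\mathbb{P}(g_1>t\sqrt{n/2})\le e^{-t^2 n/4}$. For the first term, $|g|^2$ is a sum of $n$ i.i.d.\ squared standard Gaussians, so a standard sub-exponential concentration estimate (Bernstein, or Laurent-Massart) gives $\mathbb{P}(|g|^2 < n/2)\le e^{-cn}$ for an absolute $c>0$. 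Since $t\le 1$ implies $e^{-cn}\le e^{-ct^2n}$, combining the two bounds yields the claim with $C_3=\min(1/4,c)$ and leading constant $2$.

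The main difficulty, if any, is choosing the threshold in the split so that the Gaussian-tail event $\{g_1>t\sqrt{n/2}\}$ still carries $e^{-\Omega(t^2 n)}$ decay; any constant fraction of $n$ would work, and the factor $1/2$ is merely convenient. An entirely elementary alternative, bypassing probability theory altogether, is to integrate the explicit cap density
$$\sigma_{n-1}(A_t)=\frac{\int_t^1 (1-s^2)^{(n-3)/2}\,ds}{\int_{-1}^1 (1-s^2)^{(n-3)/2}\,ds},$$
bound the numerator by $(1-s^2)^{(n-3)/2}\le e^{-(n-3)s^2/2}$ together with a one-dimensional Gaussian tail estimate, and bound the denominator from below by a constant multiple of $1/\sqrt{n}$ via the Gamma-function ratio $\Gamma(n/2)/\Gamma((n-1)/2)\sim\sqrt{n/2}$. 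This route yields the same conclusion.
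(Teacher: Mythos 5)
The paper does not reproduce a proof of this proposition; it cites \cite[p.~86]{AGA}. The argument one finds in such references is purely geometric: for $t\le 1/\sqrt2$ the radial cone over the cap $A_t$ inside $B_2^n$ is contained in the Euclidean ball $B(te_1,\sqrt{1-t^2})$ (for $r\in[0,1]$ and $\theta_1\ge t$ one has $|r\theta-te_1|^2\le r^2-2rt^2+t^2\le 1-t^2$), whence
$$
\sigma_{n-1}(A_t)\le\frac{\mathrm{vol}\bigl(B(te_1,\sqrt{1-t^2})\bigr)}{\mathrm{vol}(B_2^n)}=(1-t^2)^{n/2}\le e^{-t^2n/2},
$$
and the range $t>1/\sqrt2$ is handled by monotonicity, giving $C_3=(\log 2)/2$. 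Your proof is correct but takes a genuinely different, probabilistic route: represent $\sigma_{n-1}$ as the law of $g/|g|$ for a standard Gaussian $g$, then split on the event $\{|g|^2\ge n/2\}$, bounding one piece by the one-dimensional Gaussian tail and the other by the $\chi^2$ lower-tail estimate, using $t\le 1$ to absorb the dimension-only term $e^{-cn}$ into $e^{-ct^2n}$. The inclusion $\{g_1>t|g|\}\subset\{|g|^2<n/2\}\cup\{g_1>t\sqrt{n/2}\}$ is exactly right, and the threshold $n/2$ is indeed arbitrary up to constants, as you note. The geometric argument buys a sharper explicit constant and needs nothing beyond a volume comparison; yours buys modularity, factoring the problem into two standard concentration facts, and it is the natural template if one later wants to replace $\theta_1$ by a general Lipschitz functional. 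Your sketched second alternative, via the exact cap density $(1-s^2)^{(n-3)/2}$ and the Gamma-ratio normalization, is essentially the analytic form of the geometric proof and is also fine. Both of your routes yield the stated bound, so the proposal is sound.
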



The second inequality is the concentration inequality 
for Lipschitz continuous functions on the sphere (see, 
e.g., \cite{D}):

\begin{theorem}[Measure Concentration on $S^{n-1}$] 
\label{DL}

	Let $f : S^{n-1} \rightarrow \mathbb{R}$ be a 
	Lipschitz continuous function with Lipschitz 
	constant $b$. Then, for every $t>0$,

	$$
	    \sigma_{n-1}(\{x\in S^{n-1} : | f(x) -\mathbb{E}
	    (f)|\ge bt\})\le 4\exp(-C_4t^2n),
	$$
	where $C_4>0$ is a universal constant.

\end{theorem}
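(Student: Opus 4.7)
The plan is to derive this concentration inequality from Lévy's isoperimetric inequality on $S^{n-1}$ together with the Lipschitz hypothesis, and then transfer concentration about the median to concentration about the mean.

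First, I would recall the spherical isoperimetric inequality: among all Borel subsets of $S^{n-1}$ of a prescribed measure, the geodesic balls (spherical caps) minimise the measure of the $t$-enlargement $A_t=\{x\in S^{n-1}: d(x,A)<t\}$, where $d$ is the geodesic distance. Combining this with the cap estimate from Proposition \ref{SphericalCapBound} applied to a hemisphere gives the following set-concentration bound: for every Borel set $A\subset S^{n-1}$ with $\sigma_{n-1}(A)\ge \tfrac12$ and every $t>0$,
$$
\sigma_{n-1}(S^{n-1}\setminus A_t)\le 2\exp(-c\, t^{2} n)
$$
for a universal $c>0$. This is the one piece I would import as a black box; it is the main technical obstacle in the proof and is itself a substantial theorem (proved e.g.\ via two-point symmetrisation or curvature-dimension methods).

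Next, I would let $M_{f}$ be a median of $f$ and apply the set bound to the level sets $A_{-}=\{f\le M_{f}\}$ and $A_{+}=\{f\ge M_{f}\}$, each of measure at least $\tfrac12$. If $x\in (A_{-})_{t}$, the $b$-Lipschitz condition yields $f(x)\le M_{f}+b t$; similarly $x\in (A_{+})_{t}$ forces $f(x)\ge M_{f}-b t$. Therefore
$$
\sigma_{n-1}\bigl(\{|f-M_{f}|\ge b t\}\bigr)\le \sigma_{n-1}(S^{n-1}\setminus (A_{-})_{t})+\sigma_{n-1}(S^{n-1}\setminus (A_{+})_{t})\le 4\exp(-c\, t^{2} n).
$$

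Finally, to replace $M_f$ by $\mathbb{E}(f)$ I would integrate the tail bound:
$$
|\mathbb{E}(f)-M_{f}|\le \int_{0}^{\infty}\sigma_{n-1}\bigl(\{|f-M_{f}|\ge s\}\bigr)\, ds \le \frac{C\,b}{\sqrt{n}}.
$$
Thus the median and mean differ by at most $Cb/\sqrt{n}$, which for $t$ larger than a universal constant can be absorbed into a slightly smaller $C_{4}$ in the exponent (and for small $t$ the claimed bound is trivial, since the constant $4$ in front of the exponential makes the right-hand side exceed $1$ after an adjustment of $C_{4}$). The routine steps are the Lipschitz-to-sublevel-set comparison and this median-to-mean bookkeeping; the genuinely hard input, which I would cite rather than reprove, is the spherical isoperimetric inequality.
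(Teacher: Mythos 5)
The paper states Theorem~\ref{DL} as a background fact cited from \cite{D} and does not prove it, so there is no internal proof to compare against. Your argument is the standard derivation found in that reference and it is correct: Lévy's isoperimetric inequality on the sphere yields concentration of a Lipschitz function about its median via the sublevel/superlevel sets $A_{\pm}$, and the median-to-mean transfer (integrating the tail bound to get $|\mathbb{E}f-M_f|\le Cb/\sqrt{n}$, then absorbing this shift by shrinking $C_4$, with the small-$t$ regime handled trivially because of the leading factor $4$) finishes the proof. One small point worth flagging is that Proposition~\ref{SphericalCapBound} is stated in terms of the linear functional $\theta_1>t$ rather than geodesic enlargement of a hemisphere, so strictly speaking you need the elementary comparison $\sin t\ge 2t/\pi$ on $[0,\pi/2]$ to pass between the two; this only affects the universal constant and does not change the argument.
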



\subsection{Measure Concentration for log-concave 
probability measures}


We include Borell's theorem (\cite[p. 31]{AGA}) and its 
application on comparison of moments (\cite[p. 121]
{AGA}):

\begin{theorem} \label{borell's lemma}

	Let $\mathbb{P}$ be the probability that is 
	uniformly distributed in a convex body $K$. Let $U$ 
	be a closed, convex and symmetric set wherein $
	\mathbb{P}(U)=\delta>1/2$. Then, for any $t>1$, we 
	have
    $$
		\mathbb{P}((tU)^c)\le \delta \left(\frac{1-
		\delta}{\delta}\right)^{\frac{t+1}{2}}.
    $$
    
\end{theorem}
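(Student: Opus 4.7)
The plan is to derive the inequality from a simple geometric inclusion combined with the multiplicative form of the Brunn--Minkowski (Pr\'ekopa--Leindler) inequality applied to the uniform probability on $K$, which is log-concave by the remark following inequality~(\ref{logconcave}). First I would set $\lambda = \frac{2}{t+1}$ and establish the inclusion
$$\lambda (tU)^c + (1-\lambda) U \subseteq U^c.$$
To verify this, take any $x = \lambda b + (1-\lambda) a$ with $a \in U$ and $b \in (tU)^c$. Solving for $b$ and dividing by $t$ yields
$$\frac{b}{t} = \frac{t+1}{2t}\, x + \frac{t-1}{2t}\,(-a),$$
which is a genuine convex combination because both coefficients are non-negative and sum to $1$. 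Since $U$ is symmetric, $-a \in U$; so if $x$ also belonged to $U$, then convexity of $U$ would force $b/t \in U$, i.e.\ $b \in tU$, contradicting $b \in (tU)^c$. Hence $x \in U^c$.

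For the second step I would use the standard consequence of Pr\'ekopa--Leindler that for a log-concave probability $\mathbb{P}$ on $\mathbb{R}^n$ and Borel sets $A, B$,
$$\mathbb{P}(\lambda A + (1-\lambda) B) \ge \mathbb{P}(A)^{\lambda}\, \mathbb{P}(B)^{1-\lambda}.$$
Applied with $A = (tU)^c$ and $B = U$, and combined with the inclusion from the first step, this gives
$$1-\delta = \mathbb{P}(U^c) \ge \mathbb{P}((tU)^c)^{\lambda}\, \delta^{1-\lambda}.$$
Solving for $\mathbb{P}((tU)^c)$ produces $\mathbb{P}((tU)^c) \le (1-\delta)^{1/\lambda}\, \delta^{1-1/\lambda}$, and substituting $1/\lambda = (t+1)/2$ yields exactly $\delta\bigl(\tfrac{1-\delta}{\delta}\bigr)^{(t+1)/2}$.

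The only real obstacle is picking the weight $\lambda$ correctly: it must be tuned so that the rearrangement in the geometric inclusion produces an honest convex combination rather than an affine combination with a negative coefficient, and so that the resulting exponent in the Brunn--Minkowski step matches the desired $(t+1)/2$. The value $\lambda = 2/(t+1)$ achieves both simultaneously. Symmetry of $U$ is used in exactly one place, to turn $-a$ into an element of $U$; convexity of the underlying body $K$ enters through Brunn--Minkowski via the observation $(\lambda A + (1-\lambda) B) \cap K \supseteq \lambda(A \cap K) + (1-\lambda)(B \cap K)$.
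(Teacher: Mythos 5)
The paper states this result as Borell's lemma and cites it from \cite{AGA} without reproducing the proof, so there is no in-text argument to compare against. Your proof is correct and is the standard derivation found in the cited reference: the geometric inclusion $\lambda (tU)^c + (1-\lambda)U \subseteq U^c$ with $\lambda = 2/(t+1)$ is exactly the right observation, the convexity check on the coefficients $\tfrac{t+1}{2t}$ and $\tfrac{t-1}{2t}$ is correct for $t>1$, the use of symmetry to place $-a$ in $U$ is the only place symmetry enters, and the log-concavity of the uniform measure on $K$ feeds into the multiplicative Brunn--Minkowski (Pr\'ekopa--Leindler) inequality $\mathbb{P}(\lambda A + (1-\lambda)B)\ge \mathbb{P}(A)^{\lambda}\mathbb{P}(B)^{1-\lambda}$ to give $1-\delta\ge \mathbb{P}((tU)^c)^{\lambda}\delta^{1-\lambda}$, which rearranges to the claimed bound. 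The closing remark about $(\lambda A + (1-\lambda)B)\cap K \supseteq \lambda(A\cap K)+(1-\lambda)(B\cap K)$ correctly locates where convexity of $K$ is used. No gaps.
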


\begin{theorem}	\label{logconcave and seminorm}

	Let $\mu$ be a non-degenerate log-concave 
	probability measure on $\mathbb{R}^n$. If $f:
	\mathbb{R}^n \rightarrow \mathbb{R}$ is a seminorm, 
	then, for any $q>p\ge 1$, we have
    
   $$
		(\mathbb{E}|f|^p)^{1/p}\le  (\mathbb{E}|f|
		^q)^{1/q} \le C_5\frac{q}{p}(\mathbb{E}|f|
		^p)^{1/p},
	$$
	
    where $C_5>0$ is some universal constant.

\end{theorem}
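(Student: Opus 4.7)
The first inequality $(\mathbb{E}|f|^p)^{1/p} \le (\mathbb{E}|f|^q)^{1/q}$ is just Jensen's inequality applied to the convex function $t \mapsto t^{q/p}$ (equivalently, Hölder with exponents $q/p$ and its conjugate). This holds for any positive measure of total mass $1$ and any nonnegative $f$, so log-concavity is not used here.

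The interesting direction is the upper bound $(\mathbb{E}|f|^q)^{1/q} \le C_5 \frac{q}{p}(\mathbb{E}|f|^p)^{1/p}$, and my plan is to reduce it to Borell's lemma (Theorem \ref{borell's lemma}, applied in its general log-concave form). Write $L_p := (\mathbb{E}|f|^p)^{1/p}$ and consider the sublevel set $U := \{x : f(x) \le 2 L_p\}$. Because $f$ is a seminorm, $U$ is closed, convex, and symmetric. Markov's inequality applied to $f^p$ gives
\[
\mu(U^c) \le \frac{\mathbb{E} f^p}{(2 L_p)^p} = 2^{-p} \le \tfrac12,
\]
so $\delta := \mu(U) \ge 1 - 2^{-p} \ge 1/2$, and Borell's lemma applies.

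Borell's lemma then yields, for every $t > 1$,
\[
\mu\bigl(\{f > 2tL_p\}\bigr) = \mu((tU)^c) \le \delta \left(\frac{1-\delta}{\delta}\right)^{(t+1)/2} \le 2^{-p(t+1)/2 + O(1)},
\]
i.e., an exponential tail bound for $f$ with decay rate proportional to $p$. This is the key input.

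Finally, I would integrate the tail to recover the $q$-th moment:
\[
\mathbb{E}|f|^q = q \int_0^\infty s^{q-1}\, \mu(\{f > s\})\, ds \le (2L_p)^q + q(2L_p)^q \int_1^\infty t^{q-1} 2^{-p(t+1)/2}\, dt.
\]
The remaining integral is a Gamma-type integral with rate parameter $\sim p$; a standard change of variables bounds it by $C^q \Gamma(q) p^{-q}$, and Stirling gives $\Gamma(q)^{1/q} \le C'q$. Combining these and taking $q$-th roots produces $(\mathbb{E}|f|^q)^{1/q} \le C_5 \frac{q}{p} L_p$, as desired. The only subtlety is keeping track of constants so that the factor $q/p$ (rather than $q$ alone) comes out, which is why it is important to use the Markov bound at the $p$-th power (yielding the sharper $\mu(U)\ge 1-2^{-p}$) rather than just $\mu(U)\ge 1/2$; this is where the improvement from $q$ to $q/p$ enters.
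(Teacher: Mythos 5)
The paper does not prove Theorem~\ref{logconcave and seminorm}; it is stated as a preliminary and cited to \cite[p.~121]{AGA}. So there is no in-paper proof to compare against, but your reconstruction is essentially the standard Borell-lemma argument and is sound in outline, with two small points to address.

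First, the constant. With $U=\{f\le 2L_p\}$, Markov gives $\mu(U^c)\le 2^{-p}$, which at $p=1$ yields only $\delta=\mu(U)\ge 1/2$, not $\delta>1/2$; Borell's lemma as stated requires strict inequality, and even if applied formally the ratio $(1-\delta)/\delta$ can equal $1$ at $p=1$, so the resulting tail bound $\delta\big((1-\delta)/\delta\big)^{(t+1)/2}$ gives no decay at all. More generally your bound is $\mu((tU)^c)\le 2^{(1-p)(t+1)/2}$, so the exponent is proportional to $p-1$, not $p$; the written ``$2^{-p(t+1)/2+O(1)}$'' hides a $+\tfrac{t+1}{2}$ term that is not $O(1)$ in $t$. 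This is easily repaired: take $U=\{f\le 3L_p\}$ (or any factor strictly greater than $2$), so $\delta\ge 1-3^{-p}\ge 2/3>1/2$ for all $p\ge1$ and $(1-\delta)/\delta\le \tfrac{3}{2}\,3^{-p}\le\tfrac12$; the tail then decays like $(\tfrac32\,3^{-p})^{(t+1)/2}$, which is genuinely exponential with rate $\asymp p$ uniformly in $p\ge1$, and the rest of the Gamma-integral/Stirling computation goes through verbatim to give $(\mathbb{E}|f|^q)^{1/q}\le C\,\tfrac{q}{p}\,L_p$.

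Second, a technical remark on the lemma invoked: the paper's Theorem~\ref{borell's lemma} is stated only for the uniform measure on a convex body, whereas the present statement concerns a general non-degenerate log-concave measure. You flag this by saying you use Borell's lemma ``in its general log-concave form,'' which is indeed a theorem, but you should make explicit that this is a strictly stronger statement than what the paper records; alternatively, one can note that the conclusion for uniform measures on convex bodies is the only case the paper actually uses (namely with $\mu$ uniform on $K$ in John's position), so the reduction costs nothing in context. With these two adjustments the argument is complete, and it correctly isolates where the factor $q/p$ (as opposed to $q$) comes from: applying Markov to $f^p$ rather than $f$ so that the tail rate scales with $p$.
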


In the end, we include one more theorem about log-concave 
probability measures (Corollary 1 in \cite{On 
the equivalence between geometric and arithmetic means 
for log-concave measures}):

\begin{theorem} \label{logconcave 2}

	For each $0<b<1$ there exists a constant $C_b$ such 
	that for every log-concave probability measure $\mu$ 
	and every measurable convex symmetric set $U$ with $
	\mu(U)=b$ we have
	
    $$	
    	\mu(tU)\le C_b t\mu(U) \mbox{ for } t\in [0,1].
    $$

\end{theorem}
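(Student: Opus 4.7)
The plan is to reduce to a one-dimensional statement about log-concave densities, then handle the 1D case by direct integration using log-concavity bounds.

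For the reduction I would apply the Lov\'asz--Simonovits localization lemma to the integral inequality $\int(C_b\, t\cdot 1_U - 1_{tU})\,d\mu \ge 0$. This reduces its verification against arbitrary log-concave $\mu$ on $\mathbb{R}^n$ to verification against one-dimensional log-concave measures supported on line segments. Using the central symmetry of $U$, one can arrange that the relevant needles pass through the origin, so the 1D problem concerns a log-concave density $g$ on $\mathbb{R}$, a symmetric interval $[-a,a]$ with $\int_{-a}^a g = b$, and asks for $\int_{-ta}^{ta} g \le C_b\, t\, b$ for $t \in [0,1]$.

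For the one-dimensional claim I would use the pointwise bounds implied by log-concavity. Let $M := \max_{[-a,a]} g$. The trivial estimate $\int_{-ta}^{ta} g \le 2taM$ handles the numerator. For the denominator, assuming first that the mode of $g$ lies in $[-a,a]$, log-concavity on this interval gives $g(x) \ge M^{1-|x|/a}\, g(\pm a)^{|x|/a}$, which integrates to $\int_{-a}^a g \ge 2M(1 - e^{-\lambda})/\lambda$ with $\lambda := \log\bigl(M/\min(g(a),g(-a))\bigr)$. The normalization $\int_{\mathbb{R}} g = 1$, combined with log-concave tail bounds for $|x| > a$, then forces $\lambda \le -\log(1-b)$, and substitution yields
\[
\frac{\int_{-ta}^{ta} g}{\int_{-a}^a g} \;\le\; \frac{\lambda}{1 - e^{-\lambda}}\, t \;\le\; \frac{-\log(1-b)}{b}\, t,
\]
so $C_b = -\log(1-b)/b$ suffices. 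The exponential density $g(x) = \tfrac{1}{2}e^{-|x|}$, for which $\mu(tU)/\mu(U) = (1-(1-b)^t)/b \sim -t\log(1-b)/b$ as $t\to 0$, shows this is sharp up to constants.

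The main obstacle is making the $n$-to-$1$ dimensional reduction rigorous, so that the 1D calculation above suffices to conclude the $n$-dimensional statement. Standard tools are the Lov\'asz--Simonovits localization lemma or disintegration via polar coordinates with respect to the Minkowski functional of $U$; both require care to ensure that the 1D problem inherits the correct log-concavity and that the condition $\mu(U) = b$ transfers to the needle. A secondary obstacle is the case of the 1D analysis where the mode of $g$ lies outside $[-a,a]$: there $g$ is monotone on $[-a,a]$, and the interpolation bound above must be replaced by a direct monotonicity argument, which fortunately produces a constant of the same order $|\log(1-b)|/b$.
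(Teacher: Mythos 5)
The paper does not prove this statement; it cites it as Corollary~1 of R.~Latała's article on the equivalence of geometric and arithmetic means for log-concave measures, so there is no internal proof to compare against. Evaluating your proposal on its own terms, the dimension reduction is where it breaks. The Lov\'asz--Simonovits localization lemma produces a needle $[a,a']$ together with a weight $\ell(s)^{n-1}$, and on that needle the trace of $tU$ is $J_t:=\{s:\phi(s)\le t\}$ where $\phi(s)=\|(1-s)a+sa'\|_U$ is merely a non-negative convex function. There is no symmetry of $\phi$ about any point, $J_t$ is \emph{not} a dilate of $J_1$, and for a needle whose line misses the origin one even has $J_t=\emptyset$ for all $t$ below $\min\phi>0$. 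The central symmetry of $U$ does not let you force the needle through the origin: localization chooses the segment from the violating pair of integral inequalities, not from $U$. Attempts to fix this by symmetrizing $\mu$ fail (a symmetrized log-concave density is no longer log-concave), and a polar disintegration along rays from the origin produces fiber measures with density $|s|^{n-1}f(s\theta)$, which are log-concave on each half-line but not on $\mathbb{R}$, and whose conditional masses $b_\theta$ vary with $\theta$, so the one-dimensional inequality cannot simply be integrated. Thus the statement you actually need to verify in dimension one is about sub-level sets of an arbitrary non-negative convex function, not about symmetric intervals, and your 1D argument does not address that.

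Your 1D analysis also has a concrete flaw in the case it does address. You set $\lambda=\log\bigl(M/\min(g(a),g(-a))\bigr)$ and claim the normalization forces $\lambda\le-\log(1-b)$; but take $g(x)=e^{-x}\mathbbm{1}_{x\ge0}$ and $a$ small: then $g(-a)=0$, so $\lambda=+\infty$ while $b=1-e^{-a}<1$, contradicting your bound. (The final inequality in this example still holds with $C_b=-\log(1-b)/b$, but not via your intermediate step.) The fix is to treat the two sides separately via $\lambda_\pm=\log(M/g(\pm a))$, lower bound $\int_{-a}^ag\ge aM\bigl(\frac{1-e^{-\lambda_+}}{\lambda_+}+\frac{1-e^{-\lambda_-}}{\lambda_-}\bigr)$, and bound $\min(\lambda_+,\lambda_-)$ rather than the max; as written, the step is wrong. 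The ``mode outside $[-a,a]$'' case is also only gestured at. By contrast, Latała's route bypasses localization entirely: one shows that for a log-concave $X$ the $L^p$ moments of $\|X\|_U$ are all comparable down to and including negative $p$ (this is the content of his geometric/arithmetic mean equivalence), and then the bound $\mu(tU)=\mathbb{P}(\|X\|_U^{-1}\ge t^{-1})\le t\,\mathbb{E}\|X\|_U^{-1}$ together with $\mathbb{E}\|X\|_U^{-1}\lesssim (\mathbb{E}\|X\|_U)^{-1}\lesssim_b 1$ gives the result. If you wish to pursue a localization-style proof you must formulate and prove the correct one-dimensional statement, for sub-level sets of a convex function; otherwise, the moment-comparison route is the one to follow.
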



\section{ Proof of the main theorem}

Since the result of Theorem \ref{main theorem} is not 
affected by applying an affine transformation on $K$ or 
$P$, Theorem \ref{main theorem} can be rephrased as 
follows:

\begin{theorem} \label{main theorem John's position}
	
	For a sufficiently large $n\in \mathbb{N}$,
	\begin{enumerate}
	\item There exists a convex body $K \subset 
			\mathbb{R}^n$ in John's position such that  
			$|x_K|\ge (1-C_0\sqrt{\frac{\log(n)}{n}})n$ 
			where $C_0>0$ is a universal constant.
	
	\item There exists a convex polytope $P\subset 
			\mathbb{R}^n$ in John's position with 
			$O(n^2)$ facets such that $|x_P|\ge 
			C_1\frac{n}{\log(n)}$ where $C_1>0$ is a 
			universal constant.  
    \end{enumerate}

\end{theorem}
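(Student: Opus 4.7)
Since the problem is invariant under affine transformation, I will construct the extremal bodies $K$ and $P$ directly in John's position. By Theorem~\ref{JP}, realizing a convex body $K \supset B_2^n$ in John's position reduces to producing a set of contact points $\{u_i\} \subset S^{n-1} \cap \partial K$ and weights $c_i > 0$ satisfying $\sum c_i u_i \otimes u_i = I_n$ and $\sum c_i u_i = 0$. The guiding principle is that concentrating contact points in the lower hemisphere $\{u : \langle u,e_1\rangle < 0\}$ yields no tight facet in the $+e_1$ direction, so the body $\bigcap_i\{x : \langle x,u_i\rangle \le 1\}$ can extend all the way out to $n e_1$; by John's theorem this is the furthest $\partial K$ can reach.

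For Part~(1), the first rotationally symmetric attempt is to put all contact weight on a spherical ring at latitude $-s$ together with a single balancing anchor at $+e_1$. The John identities then force $s=1/n$ and produce a cone with apex at $ne_1$; an elementary calculation shows, however, that this cone has center of mass at the origin, because only $\tfrac{1}{n+1}$ of an $n$-dimensional cone's volume lies near the apex. To push the centroid upward I would add a second, thin ring of contact points at a slightly positive latitude that trims the wide base of the cone. The four free parameters (two latitudes and two total weights) give just enough flexibility to satisfy John's identities exactly while shifting the bulk of $\mathrm{vol}(K)$ toward the apex. Throughout, I would apply Borell's lemma (Theorem~\ref{borell's lemma}) and the moment comparison of Theorem~\ref{logconcave and seminorm} to the $x_1$-marginal of the uniform measure on $K$ to concentrate its mass near its mean and extract the quantitative lower bound $|x_K| \ge (1 - C_0\sqrt{\log n/n})n$.

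For Part~(2), I would discretize each of the two continuous contact rings by $O(n)$ well-spread unit vectors---for instance, vertices of a regular $(n-1)$-simplex inscribed on the ring, so that $\sum c_i u_i \otimes u_i = I_n$ holds exactly by symmetry after tuning the weights. Taking $O(n)$ different rings (at different latitudes) gives an $O(n^2)$-facet polytope. The loss from $(1-C_0\sqrt{\log n/n})n$ to $C_1 n/\log n$ is controlled via Proposition~\ref{SphericalCapBound}: with only $O(n^2)$ facet normals on $S^{n-1}$, the largest facet-free spherical cap has angular radius $\gtrsim \sqrt{\log n/n}$, and such a gap limits how sharply the polytope can approximate the smooth apex region, which in turn limits how much volume can be concentrated near $ne_1$.

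The step I expect to be most delicate is the tuning in Part~(1): the three requirements---John's identity holding exactly, $B_2^n$ remaining the genuine maximum-volume inscribed ellipsoid (i.e.\ no ellipsoid stretched in the $e_1$-direction beats it), and the bulk of $\mathrm{vol}(K)$ lying near the apex---pull the parameters in competing directions, and it is their balance that forces the $\sqrt{\log n/n}$ error. The reverse direction of Theorem~\ref{JP} is what certifies the second requirement, while the log-concave concentration theorems of Section~2 reduce the centroid estimate to a one-dimensional integral against a log-concave density, after which routine asymptotic analysis finishes the argument.
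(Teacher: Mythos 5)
Your high-level starting point (build $K$ in John's position, note that the natural cone-with-contact-ring has centroid at the origin, then modify) matches the paper's motivating discussion, and you correctly identify that the ring latitude is forced to be $1/n$. But the path you propose from there has a gap that I do not think can be repaired.

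The rotationally symmetric setup is fatal. If $Q$ is a circular cone with a ring of contact points at latitude $1/n$ (and south-pole anchor), the tangent half-spaces force every section to satisfy $\rho_Q(\theta,t)\le\frac{n-t}{\sqrt{n^2-1}}\le\sqrt{\frac{n+1}{n-1}}\approx 1$ \emph{for every} $\theta$. Hence $\rho_Q^{n-1}(t)\lesssim(1-t/n)^{n-1}\approx e^{-t}$, the mass stays pinned near $t=0$, and $t_K=O(1)$ no matter how you later trim the base. Adding a second ring at a larger latitude $\alpha$ only makes things worse: it introduces a new supporting cone with apex at $\frac{1}{\alpha}e_1$, which truncates the body at $x_1\le\frac{1}{\alpha}<n$ and is incompatible with your stated goal of reaching $ne_1$. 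The constraint you need---mass concentrated at height $t\approx n-\Theta(\sqrt{n\log n})$---cannot be met by any rotationally symmetric body of revolution whose contact structure is forced to include a ring at latitude $1/n$.

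The paper sidesteps this in two ways that are both absent from your sketch. First, $Q$ is not a circular cone but a \emph{cube cone}: its contact set is $\{(\pm\sqrt{1-1/n^2}\,e_i,\tfrac1n)\}_{i<n}\cup\{-e_n\}$, giving $\rho_Q(\theta,t)=\frac{1}{\max_i|\theta_i|}\cdot\frac{n-t}{\sqrt{n^2-1}}$. The factor $1/\max_i|\theta_i|$ is $\Theta(\sqrt{n/\log n})$ for a typical direction $\theta$, so $\rho_Q$ is hugely anisotropic---far larger than $1$ except near the $2(n-1)$ face directions. Second, and crucially, the body that trims the base is an auxiliary set $L$ with $L_t=(2+t/n)B_2^{n-1}$, which \emph{strictly contains} $B_2^n$ and therefore introduces no new contact points; $K=Q\cap L$ remains in John's position with the same contacts as $Q$. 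For all but a small set $O_1$ of directions, $\rho_Q\ge\rho_L$ on $t\in[-1,R_0]$ (by the spherical-cap bound, Proposition~\ref{SphericalCapBound}), so $\rho_K=\rho_L=(2+t/n)$ there, and $\rho_K^{n-1}$ genuinely grows in $t$. That is the mechanism pushing the centroid to $n-\Theta(\sqrt{n\log n})$, and it depends on both the anisotropy of the cube cone and on $L$ not touching $\partial B_2^n$.

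Two smaller points. The quantitative step in the paper's proof of this theorem is a direct computation of $\int_{S^{n-2}}\int_{-1}^n\rho_K(\theta,t)^{n-1}(t-R)\,dt\,d\sigma_{n-2}(\theta)$ (the criterion in (\ref{MainArgument})); Borell's lemma and Theorem~\ref{logconcave and seminorm} are used only in Section~4 for Corollary~\ref{main cor} and Lemma~\ref{second moment and center of mass}, not to derive $|x_K|\ge(1-C_0\sqrt{\log n/n})n$. And for Part~(2), the paper does not discretize contact rings: it reuses the same cube cone $Q$ and intersects with an $O(n^2)$-facet cylinder $L_2$ (with facet normals $\pm(1-\epsilon_n)e_i\pm\sqrt{1-(1-\epsilon_n)^2}e_j$) that again strictly contains $B_2^n$, and the $\log n$ loss comes from optimizing $\epsilon_n\asymp 1/\log n$, not from a cap-covering argument on the sphere.
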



We write points in the form $x=(y,t)$ where $y\in 
\mathbb{R}^{n-1}$ corresponds to $\{e_i\}_{i=1}^{n-1}$ 
and $t\in \mathbb{R}$ corresponds to $e_n$. For a 
convex body $K$, we write $x_K:=(y_K,t_K)$. Observe 
that, for $R>0$, 

\begin{eqnarray*}
	&	t_K-R &\ge 0\\
	\Leftrightarrow & \frac{1}{\text{vol}(K)}\int_K (t-R) 
	dx&\ge 0\\
	\Leftrightarrow & \int_K (t-R) dx&\ge 0.\\
\end{eqnarray*}
 
Also, for a convex body $K\subset \mathbb{R}^n$, let 
$K_t:= \{y \in \mathbb{R}^{n-1} \, , \, (y,t)\in K\}$, 
which is a slice of the convex body $K$. Let $[a_K,b_K]
$ be the orthogonal projection of $K$ to the span of 
$e_n$.
   
Assuming $0\in K_t$ for all $t\in[a_k,b_k]$, let 
$\rho_K(\cdot ,t)$ denote the radial function of $K_t$ 
as a convex body in $\mathbb{R}^{n-1}$. Then,
 
\begin{eqnarray*}
	 &&\int_{K} (t-R) dx \\
    &=& \int_{a_K}^{b_K} (t-R)\int_{K_t}dydt\\
    &=& \int_{a_K}^{b_K} (t-R)(n-1)\kappa_{n-1}
    \int_0^{\rho_K(\theta,t)}
    \int_{S^{n-2}} r^{n-2}drd\sigma_{n-2}(\theta)dt\\
    &=& \kappa_{n-1}\int_{S^{n-2}}\int_{a_K}^{b_K}
    \rho_K(\theta,t)^{n-1}(t-R)dt
    d\sigma_{n-2}(\theta) ,
\end{eqnarray*}
where $\kappa_n$ denote the volume of $B_2^n$. With $|
x_K|\ge |t_K|$, we conclude

\begin{align}
	\label{MainArgument}
	\begin{split}
	&	\int_{S^{n-2}}\int_{a_K}^{b_K}
	\rho_K(\theta,t)^{n-1}(t-R)dt
    d\sigma_{n-2}(\theta)\ge 0\\ 
	\Rightarrow & |x_K|\ge R.
	\end{split}
\end{align}

Before moving on to the proof of the main theorem, we 
examine two simple convex bodies in $\mathbb{R}
^n$. 

Let $0\in B \subset \mathbb{R}^{n-1}$ be a $n-1$ 
dimensional convex body. We define $B_1,B_2\subset 
\mathbb{R}^n$ as

\begin{eqnarray*}
	B_1&:=& \{ (y,t)\in \mathbb{R}^n \, \, , \, \, y\in 
	B \text{ and } t\in [0,n+1]\} \text{, and }\\
    B_2&:=& \{ (y,t)\in \mathbb{R}^n \, \, , \, \, y\in 
    \frac{t}{n+1}B \text{ and } t\in [0,n+1]\}.\\   
\end{eqnarray*}

In other words, $B_1$ is a cylinder and $B_2$ is a 
cone. Both of them have the same base $B$ and height 
$n+1$. We have $t_{B_1}=\frac{n+1}{2}$. 

For $t_{B_2}$, using the fact that $B_2$ is a cone, 
we have
\begin{eqnarray*}
	t_{B_2}&=& \langle x_{B_2},e_n\rangle \\
	&=&  \frac{1}{\text{vol}(B_2)} \int_{B_2}tdx\\
	&=&	  \frac{n}{(n+1)\text{vol}(B)}\int_{0}
	^{n+1}\text{vol}(B)(\frac{t}{n+1})^{n-1}tdt\\
	&=&  n.
\end{eqnarray*}

Comparing these two examples, we see that $x_{B_2}$ is much closer 
to its base. For the same reason, the convex hull of 
$B_2^n$ and $ne_n$, which is in John's position, has 
a center of mass that lies in $B_2^n$, because its shape is 
similar to that of a cone.\\

We will construct examples in the Theorem \ref{main 
theorem John's position} as the intersection of two convex 
bodies, $Q\cap L$. $Q$ and $L$ will satisfy the 
following:

\begin{enumerate}
	\item $Q$ is in John's position. $L$ contains 
			$B_2^n$. Thus, $Q\cap L$ is also in John's 
			position.
			 
	\item 	$L$ will be a cone (or a cylinder) with the 
			property that $Q\cap L$ and $L$ have a similar 
			shape. Therefore, $x_{Q\cap L}$ behaves like 
			the center of mass of a cone (or a 
			cylinder).
\end{enumerate}



\subsection{Construction of $Q$}

The following proposition is related to the contact 
points decomposition of the identity:

\begin{prop} \label{CJP}
	Let $u_1,...,u_m$ be unit vectors in $\mathbb{R}
	^{n-1}=\mbox{span}\{e_1,..,e_{n-1}\} \subset 
	\mathbb{R}^n$, and $c_1,...,c_m>0$ be some positive 
	numbers such that 
    $$\sum_{i=1}^m c_i u_i\otimes 
	u_i=I_{n-1} \text{\quad and \quad}
    \sum_{i=1}^mc_iu_i=\vec{0}.$$ 
    Set 
	$v_i=(\sqrt{1-\frac{1}{n^2}}u_i,\frac{1}{n})\in 
	\mathbb{R}^n$ for $i=1,...,m$ and 
	$v_0=(\vec{0},-1)$. With $c_i'=\frac{c_i}{1-\frac{1}
	{n^2}}$ and $c_0'=\frac{n}{n+1}$, we obtain

	$$
		\sum_{i=0}^m c'_i v_i\otimes v_i= I_n,
	\text{\quad and \quad} \sum_{i=0}^m c'_iv_i=\vec{0}.$$
\end{prop}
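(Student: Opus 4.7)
The plan is a direct block-matrix verification, using the two given identities on $\{u_i, c_i\}$ together with one extra scalar extracted from them, namely $\sum_{i=1}^m c_i = n-1$. This scalar is obtained immediately by taking the trace of $\sum_{i=1}^m c_i u_i \otimes u_i = I_{n-1}$ and using $|u_i|=1$. I would state this computation first, as it drives every subsequent cancellation.

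Next I would check the vector identity $\sum_{i=0}^m c_i' v_i = \vec{0}$ by splitting the sum into its $\mathbb{R}^{n-1}$ part and its $e_n$ part. The $\mathbb{R}^{n-1}$ component of $\sum_{i=1}^m c_i' v_i$ is a scalar multiple of $\sum_{i=1}^m c_i u_i$, hence zero, and $v_0$ contributes nothing here. The $e_n$ component reduces to $-\tfrac{n}{n+1} + \tfrac{1}{n(1-1/n^2)} \sum_{i=1}^m c_i$, and substituting $\sum c_i = n-1$ collapses this to $-\tfrac{n}{n+1} + \tfrac{n}{n+1} = 0$.

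For the matrix identity $\sum_{i=0}^m c_i' v_i \otimes v_i = I_n$, I would write $v_i \otimes v_i$ in the block form
\[
v_i \otimes v_i = \begin{pmatrix} (1-\tfrac{1}{n^2})\, u_i \otimes u_i & \tfrac{\sqrt{1-1/n^2}}{n}\, u_i \\ \tfrac{\sqrt{1-1/n^2}}{n}\, u_i^T & \tfrac{1}{n^2} \end{pmatrix},
\]
and handle the three independent blocks separately. The top-left block becomes $\tfrac{1}{1-1/n^2}(1-\tfrac{1}{n^2}) \sum c_i u_i \otimes u_i = I_{n-1}$ after applying the outer-product hypothesis. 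The off-diagonal blocks vanish because they are scalar multiples of $\sum c_i u_i = 0$. The bottom-right scalar from the $i \geq 1$ terms is $\tfrac{1}{1-1/n^2} \cdot \tfrac{n-1}{n^2} = \tfrac{1}{n+1}$, and adding the $v_0 \otimes v_0 = e_n \otimes e_n$ contribution scaled by $c_0' = \tfrac{n}{n+1}$ produces $1$, as required.

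There is really no obstacle here; the only small point to get right is the bookkeeping of the normalization constants $1-1/n^2$, $1/n$, $n/(n+1)$, which is engineered precisely so that the trace relation $\sum c_i = n-1$ closes up the $e_n\!-\!e_n$ entry. I would therefore present the proof as the three computations above in order: first the trace identity, then the vector identity split into its two components, and finally the matrix identity split into its three blocks.
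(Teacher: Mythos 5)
Your proposal is correct and takes essentially the same approach as the paper: both extract $\sum_{i=1}^m c_i = n-1$ from the trace of the identity hypothesis, expand $v_i \otimes v_i$ into its $u_i\otimes u_i$, mixed, and $e_n\otimes e_n$ pieces, and then let the two given identities kill the mixed terms and fix the diagonal blocks. The only differences are presentational, writing the expansion in block-matrix form rather than as a sum of rank-one tensors, and checking the vector identity before the matrix one.
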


\begin{proof}

From the definition of $v_i$, we have
$$
	v_i\otimes v_i=\frac{1}{n^2}e_n\otimes e_n+
	\frac{1}{n}\sqrt{1-\frac{1}{n^2}}(e_n\otimes u_i + 	u_i \otimes e_n)
    +(1-\frac{1}{n^2})u_i\otimes u_i.
$$

We know that $n-1=\mbox{Tr}(I_{n-1})=\mbox{Tr}
(\sum_{i=1}^m c_i u_i\otimes u_i)=\sum_{i=1}^m c_i$.
Thus, we have
$$
	\sum_{i=0}^m c_i v_i \otimes v_i= \frac{n-1}{n^2}	e_n\otimes e_n 
    +(1-\frac{1}{n^2})I_{n-1}.
$$
where we use the fact that $\sum_{i=1}^m c_i=n-1$ and $
\sum_{i=1}^m c_iu_i=\vec{0}$. Now let $c'_i=\frac{c_i}
{1-\frac{1}{n^2}}=\frac{n^2c_i}{n^2-1}$ for $i=1,...,m$
and $c'_0=\frac{n}{n+1}$. We then have
$$
	\sum_{i=1}^m c'_iv_i\otimes v_i+c'_0(-e_n)\otimes
	(-e_n)=\frac{n-1}{n^2-1}e_n\otimes e_n + I_{n-1}+
	\frac{n}{n+1}e_n\otimes e_n=I_n.
$$
	
Also,

$$
	\sum_{i=1}^m c'_iv_i-c'_0e_n=(\frac{n-1}{n}
	\frac{n^2}{n^2-1}-\frac{n}{n+1})e_n=\vec{0}.
$$

\end{proof}

The points $\{u_j\}_{j=1}^{2(n-1)}=\{\pm e_i\}_{i=1}
^{n-1}$  with $c_j=\frac{1}{2}$ satisfy the 
assumption  of Proposition \ref{CJP}. We set

$$
	A:=\{(\pm\sqrt{1-\frac{1}{n^2}}e_i,\frac{1}{n})\}
	_{i=1}^{n-1}\cup \{(\vec{0},-1)\}
$$

and

$$
	Q:= \{ x \in \mathbb{R}^n \, , \, \forall u \in A \, 
	\, \langle x,u\rangle \le 1 \}.
$$

The set $A$ is the collection of contact points of $Q$. 
By Proposition \ref{CJP} and Theorem \ref{JP}, $Q$ is 
in John's position. \\

Let $B_\infty^{n-1}:=\{ y\in \mathbb{R}^{n-1} \, , \, 
\forall i=1,2,\cdots,n-1 \, \, |\langle y,e_i\rangle|
\le 1\}$ be the unit cube in $\mathbb{R}^{n-1}$.

\begin{eqnarray*}
	Q &=& \{ x \in \mathbb{R}^n \, , \, \forall u \in A 
	\, \, \langle x,u \rangle \le 1 \}\\
	&=& \{ (y,t) \in \mathbb{R}^n \, , \, y\in \frac{n-
	t}{\sqrt{n^2-1}}B_\infty^{n-1} \text{ and } 
	t\in[-1,n]\}.
\end{eqnarray*}

$Q$ is in John's position and it is a cone with 
base $\frac{n+1}{\sqrt{n^2-1}}B_\infty^{n-1}$ and 
height $n+1$. Thus, $Q_t$ is 
$\frac{n-t}{\sqrt{n^2-1}}B_\infty^{n-1}$ for 
$t\in[-1,n]$. Since the radial function of 
$B_\infty^{n-1}$ is $
\rho_{B_\infty^{n-1}}(\theta)=\frac{1}{\max\{|\langle 
\theta,e_i\rangle|\}_{i=1}^{n-1}}$. We have

\begin{equation}
	\label{RadialFunQ}
	\rho_Q(\theta,t)=\frac{1}{\max\{|\langle 
	\theta,e_i\rangle|\}_{i=1}^{n-1}}\frac{n-t}
	{\sqrt{n^2-1}}.
\end{equation}


\subsection{Proof of Theorem \ref{main theorem John's 
position}(1)}

We define
\begin{equation}
	L:=\{(y,t)\in \mathbb{R}^{n} \, , \, y\in (2+
	\frac{t}{n})B_2^{n-1} \text{ and } t\in[-1,n]\}.
\end{equation}

In particular, $L_t$ is equal to $(2+\frac{t}{n})B_2^{n-1}$ and
the raidal function is $\rho_L(\theta,t)=2+\frac{t}{n}$.

Fix $R_0=n-\frac{C_0}{2}\sqrt{\log(n)n}$ for some 
$C_0>0$ that we will determine later. Then, we have $
\rho_L(\theta,R_0)=3-\frac{C_0}{2}\sqrt{\frac{\log(n)}
{n}}$. By (\ref{RadialFunQ}),

$$
	\rho_Q(\theta,R_0)=\frac{1}{\max\{|\langle 
	\theta,e_i\rangle|\}_{i=1}^{n-1}}\frac{C_0}
	{2}\frac{\sqrt{\log(n)n}}{\sqrt{n^2-1}}.
$$ 

We split $S^{n-2}$ into two components by defining
$$
	O_1:=\{\theta \in S^{n-2} , \rho_Q(\theta,R_0)\le 
	\rho_L(\theta,R_0)\}.
$$

For a sufficiently large $n$, we have

\begin{align*}
	O_1&= &\{ \theta\in S^{n-2} \, , \, \frac{1}{(3-
	\frac{C_0}{2}\sqrt{\frac{\log(n)}{n}})}\frac{C_0}
	{2}\frac{\sqrt{\log(n)n}}{\sqrt{n^2-1}}\le  \max\{|
	\langle \theta,e_i\rangle|\}_{i=1}^{n-1}\}\\
	&\subset &\{ \theta\in S^{n-2} \, , \, \frac{C_0}
	{6}\frac{\sqrt{\log(n)n}}{\sqrt{n^2-1}}\le  \max\{|
	\langle \theta,e_i\rangle|\}_{i=1}^{n-1}\}\\
	&\subset& \cup_{i=1}^{n-1}\{\theta \in S^{n-2}, 
	\frac{C_0}{6}\frac{\sqrt{\log(n)n}}{\sqrt{n^2-1}}
	\le |\langle \theta,e_i\rangle|\}.
\end{align*}
 
Due to Proposition \ref{SphericalCapBound}, the measure of $O_1$ can be bounded:
\begin{align*}
	\sigma_{n-2}(O_1)&\le 4n\exp(-\frac{1}{36}
	C_3C^2_0\frac{n^2}{n^2-1}\log(n))\\
	&\le 4\exp\left((1-\frac{1}{36}C_3C^2_0)\log(n)
	\right).
\end{align*}

By setting $C_0:=\sqrt{\frac{72}{C_3}}$, for a
sufficiently large $n$, we have 

\begin{equation}
	\label{O_1bound}
	\sigma_{n-2}(O_1) \le 4\exp(-\log(n))\le 
	\frac{1}{2}.	
\end{equation}

Moreover, $\rho_L(\theta,t)$ is increasing with respect 
to $t\in[-1,n]$, while $\rho_Q(\theta,t)$ is decreasing 
with respect to $t\in[-1,n]$. We may conclude that,

\begin{equation}
	\label{O_1^cbound}
	\forall\theta \in O_1^c \text{ , }\rho_Q(\theta,t)
	\ge \rho_L(\theta,t) \text{ for }t\in[-1, R_0].
\end{equation}

We define $K$ to be the intersection of $Q$ and $L$, $K=Q\cap L$. Then, we have $K_t=Q_t\cap L_t$ and thus $
\rho_K(\theta,t)=\min\{\rho_Q(\theta,t),
\rho_L(\theta,t)\}$. 

By (\ref{MainArgument}), it is sufficient
to prove 
\begin{equation}
\label{MainArgument1}
\int_{S^{n-1}}\int_{-1}^n \rho_K(\theta,t)^{n-1}(t-
	R)dtd\sigma_{n-2}(\theta)\ge 0 ,
\end{equation}
with $R=n-C_0\sqrt{\log(n)n}$. For the inner integral in
(\ref{MainArgument1}):

\begin{align*}\begin{split}
	&\int_{-1}^n \rho_K(\theta,t)^{n-1}(t-R)dt\\
\le & \int_{-1}^{R_0} \rho_K(\theta,t)^{n-1}(t-R)dt\\
= & -\int_{-1}^R \rho_K(\theta,t)^{n-1}(R-t)dt + 
\int_R^{R_0} \rho_K(\theta,t)^{n-1}(t-R)dt.\end{split}
\end{align*}

For the first component, with $\rho_K(\theta,t)\le
\rho_L(\theta,t)=2+\frac{t}{n}$, we have 
$$
	\int_{-1}^R \rho_K(\theta,t)^{n-1}(R-t)dt \le\int_{-1}^R 
    (2+\frac{t}{n})^{n-1}(R-t)dt.
$$
The integral on the right side is computable via integration by parts:
\begin{align*}
    & \int_{-1}^R (2+\frac{t}{n})^{n-1}(R-t)dt\\
	= & \left.(2+\frac{t}{n})^n(R-t) \right]_{-1}^R+
	\int_{-1}^R (2+\frac{t}{n})^ndt\\
	= & -(2-\frac{1}{n})^n(R+1)+ \frac{n}{n+1}(2+
	\frac{R}{n})^{n+1}-\frac{n}{n+1}(2-\frac{1}{n})^{n+1}\\
	\le & \frac{n}{n+1}(2+\frac{R}{n})^{n+1}.
\end{align*}
Thus, 
\begin{equation} \label{minus1}
	\int_{-1}^R\rho_K(\theta,t)^{n-1}(R-t)dt\le 
    \frac{n}{n+1}(2+\frac{R}{n})^{n+1}.
\end{equation}

For $\theta \in O_1^c$, due to (\ref{O_1^cbound}) we have 
$\rho_K(\theta,t)=\rho_L(\theta,t)=(2+\frac{t}{n})$ for 
$t\in[-1,n]$. Thus, we have the equality when $\theta\in O_1^c$:
$$
\int_R^{R_0} \rho_K(\theta,t)^{n-1}(t-R)dt
	=\int_R^{R_0} \rho_L(\theta,t)^{n-1}(t-R)dt.
$$
Again, the integral on the right side is computable: 
\begin{align*}\begin{split}
	&\int_R^{R_0} (2+\frac{t}{n})^{n-1}(t-R)dt \\
	=&\left.(2+\frac{t}{n})^n(t-R) \right]_{R}^{R_0}-
	\int_{R}^{R_0} (2+\frac{t}{n})^ndt\\
	=& (2+\frac{R_0}{n})^n(R_0-R)-\frac{n}{n+1}(2+
	\frac{R_0}{n})^{n+1}+\frac{n}{n+1}(2+\frac{R}
	{n})^{n+1}.
\end{split}
\end{align*}

Observe that, for a sufficiently large $n$, we have 
$$
	(R_0-R)=\frac{C_1}{2}\sqrt{\log(n)n}>4>2\frac{n}
	{n+1}(2+\frac{R_0}{n}).
$$
Hence, the previous equality can be bounded:
\begin{eqnarray*}
	&(2+\frac{R_0}{n})^n(R_0-R)-\frac{n}{n+1}(2+
	\frac{R_0}{n})^{n+1}+\frac{n}{n+1}(2+\frac{R}
	{n})^{n+1}\\
    \ge& \frac{1}{2}(2+\frac{R_0}{n})^n(R_0-R).
\end{eqnarray*}
We conclude that, for any $\theta \in O_1^c$,
\begin{equation}
\label{plus1}
\int_R^{R_0} \rho_K(\theta,t)^{n-1}(t-R)dt \ge 
\frac{1}{2}(2+\frac{R_0}{n})^n(R_0-R).
\end{equation}

Now we can derive the main inequality (\ref{MainArgument1}). 
First, we split the integral:

\begin{eqnarray*}
	&\int_{S^{n-1}}\int_{-1}^n \rho_K(\theta,t)^{n-1}(t-
	R)dtd\sigma_{n-2}(\theta)\\
	=& \int_{S^{n-1}}\int_{-1}
	^{R}\rho_K(\theta,t)^{n-1}(t-R)dtd\sigma_{n-2}(\theta)\\
    &+ \int_{S^{n-1}}\int_{R}
	^{R_0}\rho_K(\theta,t)^{n-1}(t-R)dtd\sigma_{n-2}(\theta)\\
    &+ \int_{S^{n-1}}\int_{R_0}
	^{n}\rho_K(\theta,t)^{n-1}(t-R)dtd\sigma_{n-2}
	(\theta).
\end{eqnarray*}
By (\ref{minus1}), the first summand satisfies
$$
	\int_{S^{n-1}}\int_{-1}
	^{R}\rho_K(\theta,t)^{n-1}(t-R)dtd\sigma_{n-2}(\theta)\ge 
    -\frac{n}{n+1}(2+\frac{R}{n})^{n+1}.
$$
According to (\ref{plus1}) and (\ref{O_1bound}), the second 
summand satisfies
\begin{eqnarray*}
&&\int_{S^{n-1}}\int_{R}
	^{R_0}\rho_K(\theta,t)^{n-1}(t-R)dtd\sigma_{n-2}(\theta)\\
&\ge& \int_{O_1^c}\int_{R}
	^{R_0}\rho_K(\theta,t)^{n-1}(t-R)dtd\sigma_{n-2}(\theta)\\
&\ge& \frac{1}{4}(2+\frac{R_0}{n})^n(R_0-R).
\end{eqnarray*}
Noticing that the third summand is non-negative, we conclude that 
\begin{eqnarray*}
&\int_{S^{n-1}}\int_{-1}^n \rho_K(\theta,t)^{n-1}(t-
	R)dtd\sigma_{n-2}(\theta)\\
\ge& -\frac{n}{n+1}(2+\frac{R}{n})^{n+1}
+\frac{1}{4}(2+\frac{R_0}{n})^n(R_0-R).
\end{eqnarray*}

With $\frac{1}{4}(R_0-R)>2>\frac{n}{n+1}(2+\frac{R}{n})
$ and $(2+\frac{R_0}{n})^n>(2+\frac{R}{n})^n$, we get 

$$
-\frac{n}{n+1}(2+\frac{R}{n})^{n+1}+\frac{1}{4}(2+
\frac{R_0}{n})^n(R_0-R)>0
$$
for a sufficiently large $n$. Hence,
$$
\int_{S^{n-1}}\int_{-1}^n \rho_K(\theta,t)(t-
R)dtd\sigma_{n-2}(\theta)>0.
$$ 

We conclude from (\ref{MainArgument}) that 
$$
	|x_K|>R=n-C_0\sqrt{\log(n)n}=(1-
	C_0\sqrt{\frac{\log(n)}{n}})n.
$$


\subsection{Proof of Theorem \ref{main theorem John's 
position} (2)}

To construct $P$ in Theorem \ref{main theorem John's 
position} (2) we define a cylinder $L_2$, 
which is the intersection of $O(n^2)$ number of half 
spaces and set $P:=Q\cap L_2$, where $Q$ is the same as above. \\

\vspace{.1in}

Let $\{\epsilon_n\}$ be a decreasing sequence. Later we 
will specify $\epsilon_n$, but for now we assume that

\begin{eqnarray}
	\label{epsilonconstraint} \frac{10}{n}<\epsilon_n<1 
	&\text{, and}\\
	\label{epsilondecay} \lim_{n\rightarrow +\infty}
	\epsilon_n=0.
\end{eqnarray}

Let
$$
	A':=\{ \pm(1-\epsilon_n)e_i\pm\sqrt{1-(1-
	\epsilon_n)^2}e_j \}_{ i,j<n \,i\neq j},
$$
and
$$
	L_2 := \{ (y,t) \in \mathbb{R}^n, \langle y, u 
	\rangle\le 1 \, \, \forall u\in A' \text{ and } 
	t\in[-1,n]\}.
$$

We have $|A'|=4n(n-1)$ and $L_2$ is a cylinder with 

$$
	L_{2,t} = \{ y \in \mathbb{R}^{n-1}, \langle y, u 
	\rangle\le 1 \, \, \forall u\in A'\}
$$
for $t\in [-1,n]$. Let $P=Q\cap L_2$. Since 
$B_2^n\subset L_2$ and $Q$ is in John's position, $P$ 
is in John's position. Following the same approach from 
the proof of Theorem \ref{main theorem John's position} 
(1), we want to show

\begin{equation}
	\label{EQ1} \int_{S^{n-2}}\int_{-1}^n
	\rho_P(\theta,t)^{n-1}(t-\frac{1}{5}\epsilon_n n)dt 
	d\sigma_{n-2}(\theta) >0.
\end{equation}

Then, we can conclude $|x_P|>\frac{1}{5}\epsilon_n n$.

\vspace{.1in}

For convenience, let $Q':=Q_{\epsilon_n n}$ and 
$L':=L_{2,\epsilon_n n}$. Also, let $\rho_{Q'}
(\cdot):=\rho_{Q}(\cdot,\epsilon_n n)$ and $\rho_{L'}
(\cdot):=\rho_{L_2}(\cdot,\epsilon_n n)$. We will show 
that for the majority of $\theta \in S^{n-2}$, $
\rho_P(\theta,t)=\rho_{L_2}(\theta,t)$ for $t \in 
[-1,\epsilon_n n]$. In the case that $\rho_P(\theta,t)
\neq\rho_{L_2}(\theta,t)$ for some $t$ in $
[-1,\epsilon_n n]$, $\rho_P(\theta,t)$ will be nicely 
bounded.


\begin{prop} \label{PropertyofKandL}

	With the notation above, let 
	$$
		O_2:=\{\theta \in S^{n-2} \, , \,\rho_{Q'}
		(\theta)\le \rho_{L'}(\theta)\}.
	$$

	For a sufficiently large $n$, we have
    \begin{enumerate}

	    \item $\forall \theta \in O_2$, $\rho_{Q'}
	    (\theta)\le 4\sqrt{\epsilon_n n}$.
   		 \item $\sigma_{n-2}(O_2)\le 4n\exp(-\frac{C_6}
   		 {\epsilon_n})$, where $C_6>0$ is a universal 
   		 constant.
    \end{enumerate}
\end{prop}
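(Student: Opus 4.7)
The plan is to make the defining condition of $O_2$ concrete by writing out $\rho_{Q'}$ and $\rho_{L'}$ explicitly, and then to deduce both (1) and (2) from a single consequence: every $\theta \in O_2$ has its second-largest absolute coordinate $|\theta|_{(2)}$ much smaller than its largest, $|\theta|_{(1)}=\|\theta\|_\infty$. By (\ref{RadialFunQ}),
$$\rho_{Q'}(\theta) = \frac{n(1-\epsilon_n)}{\sqrt{n^2-1}\,|\theta|_{(1)}}.$$
Since $L_2$ is a cylinder, $\rho_{L'}(\theta) = 1/\max_{u\in A'}\langle\theta,u\rangle$; choosing signs to maximize the inner product and then using $1-\epsilon_n > \sqrt{1-(1-\epsilon_n)^2}$ (valid because $\epsilon_n \to 0$) to assign the largest absolute coordinate to the $i$-slot and the second-largest to the $j$-slot, one finds
$$\rho_{L'}(\theta) = \frac{1}{(1-\epsilon_n)\,|\theta|_{(1)} + \sqrt{2\epsilon_n-\epsilon_n^2}\,|\theta|_{(2)}}.$$
Substituting these formulas into $\rho_{Q'}(\theta) \le \rho_{L'}(\theta)$ and rearranging yields
$$\frac{n(1-\epsilon_n)\sqrt{2\epsilon_n-\epsilon_n^2}}{\sqrt{n^2-1}}\,|\theta|_{(2)} \le \left(1 - \frac{n(1-\epsilon_n)^2}{\sqrt{n^2-1}}\right)|\theta|_{(1)}.$$
Because $\frac{n}{\sqrt{n^2-1}} = 1+O(1/n^2)$, and because $\epsilon_n > 10/n$, the coefficient on the right is of order $\epsilon_n$ and the coefficient on the left of order $\sqrt{\epsilon_n}$, so the condition simplifies to $|\theta|_{(2)} \le C\sqrt{\epsilon_n}\,|\theta|_{(1)}$ for some universal $C$.

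For (1), I would combine this with $1 = \sum_{k=1}^{n-1}\theta_k^2 \le |\theta|_{(1)}^2 + (n-2)|\theta|_{(2)}^2 \le |\theta|_{(1)}^2\bigl(1 + C^2(n-2)\epsilon_n\bigr)$; since $\epsilon_n n > 10$, this gives $|\theta|_{(1)} \ge 1/(2C\sqrt{\epsilon_n n})$ for large $n$, and substituting back into the formula for $\rho_{Q'}$ produces $\rho_{Q'}(\theta) \le 4\sqrt{\epsilon_n n}$ after absorbing constants.

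For (2), the same bound gives the inclusion
$$O_2 \subset \bigcup_{i=1}^{n-1}\bigl\{\theta \in S^{n-2}: |\theta_j| \le C\sqrt{\epsilon_n}\,|\theta_i| \text{ for all } j\ne i\bigr\},$$
and on each set in the union we have $1-\theta_i^2 \le C^2(n-2)\epsilon_n\,\theta_i^2$, i.e.\ $|\theta_i| \ge t^*:=1/\sqrt{1+C^2(n-2)\epsilon_n}$. Applying Proposition \ref{SphericalCapBound} on $S^{n-2}$ bounds the measure of each such set by $4\exp(-C_3(t^*)^2(n-1))$, and for $\epsilon_n>10/n$ we have $(t^*)^2(n-1) \ge C_6/\epsilon_n$ for some universal $C_6>0$; a union bound over $i=1,\dots,n-1$ then yields $\sigma_{n-2}(O_2) \le 4n\exp(-C_6/\epsilon_n)$. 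The main obstacle I anticipate is algebraic bookkeeping: checking that the $O(1/n^2)$ corrections from $\frac{n}{\sqrt{n^2-1}}\ne 1$ and the lower-order terms inside $\sqrt{2\epsilon_n-\epsilon_n^2}$ remain uniformly harmless over the full range $10/n < \epsilon_n < 1$, so that the constants $C$, $C_6$ and the explicit coefficient $4$ in the stated bounds all come out genuinely universal.
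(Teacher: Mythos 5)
Your proposal is correct and takes essentially the same route as the paper: both arguments extract from $\rho_{Q'}(\theta)\le\rho_{L'}(\theta)$ the key fact that the largest coordinate of $\theta$ must be of order $1/\sqrt{\epsilon_n n}$, and then read off claim (1) from the formula for $\rho_{Q'}$ and claim (2) from Proposition \ref{SphericalCapBound}. The only organizational difference is that the paper works geometrically with a boundary point $y=\rho_{Q'}(\theta)\theta\in\partial Q'\cap L'$ and uses the defining half-space constraints of $L'$ to bound each $|y_j|$, whereas you derive an explicit closed form for $\rho_{L'}$ (using $1-\epsilon_n>\sqrt{1-(1-\epsilon_n)^2}$) and manipulate the resulting algebraic inequality directly into $|\theta|_{(2)}\le C\sqrt{\epsilon_n}\,|\theta|_{(1)}$; the two routes encode the same estimate, and as you suspected the constants do work out (one gets $C\le\sqrt{2}+o(1)$, well within what is needed for the stated factor $4$).
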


\begin{proof}
Let $y \in \partial Q' \cap L'$. Then, there exists 
$i$ such that $|y_i|=(1-\epsilon_n)\frac{n}
{\sqrt{n^2-1}}=\rho_{Q'}(\frac{y}{|y|})$. 
Following the conditions from the definition of 
$L'$, we have, for $j\neq i$,

\begin{eqnarray*}
	&(1-\epsilon_n)|y_i|+\sqrt{1-(1-\epsilon_n)^2}|y_j|
	&\le 1\\
	\Rightarrow & \sqrt{1-(1-\epsilon_n)^2}|y_j| &\le  
	1-(1-\epsilon_n)^2\\
	\Rightarrow & |y_j| &\le  \sqrt{1-(1-\epsilon_n)^2},
\end{eqnarray*}

where for the second inequality we use $\frac{n}
{\sqrt{n^2-1}}\ge 1$.\\
    
From the previous argument, $y \in \partial Q' \cap L'$ 
implies that

\begin{equation}
	\label{normof|y|}
	|y|\le \sqrt{(n-2)(1-(1-\epsilon_n)^2)+(1-
	\epsilon_n)^2\frac{n^2}{n^2-1}} .
\end{equation}

By (\ref{epsilonconstraint}) and (\ref{epsilondecay}), 
we have $0<(1-(1-\epsilon_n)^2)=2\epsilon_n-
\epsilon_n^2 \le 2\epsilon_n$ and $n\epsilon_n>1$. Hence, 
(\ref{normof|y|}) becomes

\begin{eqnarray*}
	|y|\le & 	\sqrt{(n-2)(1-(1-\epsilon_n)^2)+(1-
	\epsilon_n)^2\frac{n^2}{n^2-1}}\\
	\le & \sqrt{2\epsilon_n n+2}\\
	\le & 2\sqrt{\epsilon_n n},
\end{eqnarray*}
which proves Claim (1) in Proposition \ref{PropertyofKandL}.\\

For $\theta \in O_2$, $\rho_Q(\theta)\theta \in 
\partial Q'\cap L'$. There exists $i$ such that
$|(\rho_{Q'}(\theta)\theta)_i|=(1-\epsilon_n)\frac{n}
{\sqrt{n^2-1}}$. By (\ref{epsilondecay}),
$(1-\epsilon_n)\frac{n}{\sqrt{n^2-1}}>\frac{1}{2}$ for 
large $n$. 

Together with $\rho_{Q'}(\theta)\le 2\sqrt{\epsilon_n 
n}$,

\begin{equation}
\label{normboundKandL} 
	|\theta_i|=\frac{(1-\epsilon_n)\frac{n}
	{\sqrt{n^2-1}}}{\rho_{Q'}(\theta)}\ge\frac{1}
	{2\rho_{Q'}(\theta)}\ge \frac{1}{4\sqrt{\epsilon_n 
	n}}.
\end{equation}

Thus, inequality (\ref{normboundKandL}) leads to the 
following inclusion:

$$
	O_2 \subset \cup_{i=1}^{n-1} \{ \theta\in S^{n-2} 
	\, , \, |\theta_i|\ge \frac{1}{4\sqrt{\epsilon_n n}} 
	\}.
$$

By Proposition \ref{SphericalCapBound},
$$
	\sigma_{n-2}(\{ \theta \, , \, |\theta_i|\ge  
	\frac{1}{4\sqrt{\epsilon_n n}} \})\le 4\exp(-
	\frac{C_3}{16\epsilon_n}\frac{n-1}{n})\le 4\exp(-
	\frac{C_6}{\epsilon_n}).
$$ 
	
Therefore, using the union bound, we conclude that 
$$
	\sigma_{n-2}(O_2)\le 4n\exp(-\frac{C_6}
	{\epsilon_n}).
$$
\end{proof}

\begin{prop} \label{rhoLislarge}
	With the notation above, there exists a constant \
	$C_7>0$ such that if the sequence $\{\epsilon_n\}$ 
	satisfies $\frac{C_7}{\log(n)}>\epsilon_n$ for a large 
	sufficiently $n$, then

	$$
		\sigma_{n-2}( \{\theta \, , \, \rho_{L'}(\theta) 
		\le 5\sqrt{\epsilon_n n} \}) \le  4\exp(-
		\frac{C_8}{\epsilon_n}),
	$$
	where $C_8>0$ is a universal constant.
\end{prop}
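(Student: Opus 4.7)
The plan is to compute $\rho_{L'}(\theta)$ explicitly, reduce the event $\{\rho_{L'}(\theta)\le 5\sqrt{\epsilon_n n}\}$ to the event that $|\theta_i|$ is unusually large for some single index $i$, and then close the argument with Proposition \ref{SphericalCapBound} and a union bound over $i$.

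First I would observe that $L_2$ is a cylinder, so $L'=L_{2,\epsilon_n n}$ is a centrally symmetric polytope in $\mathbb{R}^{n-1}$ whose support functional on $S^{n-2}$ is
$$\max_{u\in A'}\langle\theta,u\rangle=\max_{\substack{i,j<n\\ i\neq j}}\bigl[(1-\epsilon_n)|\theta_i|+\sqrt{2\epsilon_n-\epsilon_n^2}\,|\theta_j|\bigr],$$
using $A'=-A'$ to pull the signs into absolute values. Hence $\rho_{L'}(\theta)\le 5\sqrt{\epsilon_n n}$ is equivalent to the existence of an ordered pair $i\neq j$ with $(1-\epsilon_n)|\theta_i|+\sqrt{2\epsilon_n-\epsilon_n^2}\,|\theta_j|\ge\tfrac{1}{5\sqrt{\epsilon_n n}}$.

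Next I would split each such event by summand: one of the two summands must be at least $\tfrac{1}{10\sqrt{\epsilon_n n}}$, giving the inclusion
$$\{\rho_{L'}\le 5\sqrt{\epsilon_n n}\}\subset\bigcup_{i=1}^{n-1}\bigl\{|\theta_i|\ge t_1\bigr\}\cup\bigcup_{j=1}^{n-1}\bigl\{|\theta_j|\ge t_2\bigr\},$$
with $t_1=\tfrac{1}{10(1-\epsilon_n)\sqrt{\epsilon_n n}}$ and $t_2=\tfrac{1}{10\sqrt{(2\epsilon_n-\epsilon_n^2)\epsilon_n n}}$. Under the standing hypotheses $\epsilon_n<1$ and $\epsilon_n n>10$, both $t_1^2(n-1)$ and $t_2^2(n-1)$ are bounded below by a universal multiple of $1/\epsilon_n$ (indeed $t_2$ gives the sharper $1/\epsilon_n^2$). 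Applying Proposition \ref{SphericalCapBound} to each of the $2(n-1)$ caps therefore yields a bound of the form $8(n-1)\exp(-c/\epsilon_n)$ for some universal $c>0$.

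Finally, to turn this into $4\exp(-C_8/\epsilon_n)$ I would use the hypothesis $\epsilon_n<C_7/\log n$ to absorb the factor $n-1$: if $C_7$ is chosen small enough relative to $c$, then $\log\bigl(2(n-1)\bigr)\le(c-C_8)/\epsilon_n$ (for instance with $C_8=c/2$ and $C_7$ any universal constant $\le c/4$), and the linear-in-$n$ prefactor is swallowed. The only real care point is bookkeeping the universal constants coming from Proposition \ref{SphericalCapBound} so that $C_7$ and $C_8$ end up genuinely universal and the final bound is valid for all sufficiently large $n$; the rest is routine arithmetic on the cap thresholds.
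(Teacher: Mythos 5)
Your proof is correct, but it follows a genuinely different route from the paper's. The paper introduces the norm $\|\cdot\|$ whose unit ball is $L'$, compares the spherical average $\mathbb{E}_{\sigma_{n-2}}\|\theta\|$ to the Gaussian quantity $\mathbb{E}\|g\|\lesssim\sqrt{\log n}$ via polar integration, observes that $\|\cdot\|$ is $1$-Lipschitz on $S^{n-2}$ because $B_2^{n-1}\subset L'$, and then invokes the Lipschitz concentration inequality (Theorem \ref{DL}). There the smallness condition $\epsilon_n<C_7/\log n$ is used to guarantee that the mean $\mathbb{E}\|\theta\|$ falls below half the threshold $\tfrac{1}{5\sqrt{\epsilon_n n}}$, so the event is a deviation of at least the mean. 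You instead bypass the Gaussian comparison and Lipschitz concentration entirely: you read off the support functional $\|\theta\|=\max_{i\neq j}[(1-\epsilon_n)|\theta_i|+\sqrt{2\epsilon_n-\epsilon_n^2}|\theta_j|]$, split the maximizing pair by summand, and reduce to $O(n)$ spherical-cap events, each controlled by Proposition \ref{SphericalCapBound}. The price is a polynomial-in-$n$ prefactor from the union bound, which you then absorb using the same hypothesis $\epsilon_n<C_7/\log n$, so that $\log n\lesssim 1/\epsilon_n$ swallows $\log(2(n-1))$. Your argument is more elementary (only the cap bound is needed) and mirrors the way Proposition \ref{PropertyofKandL}(2) is proved; the paper's approach is slicker in that it avoids the union-bound bookkeeping but requires the heavier concentration machinery. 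Both proofs use the hypothesis on $\epsilon_n$ in an essential way, just at different points. One small simplification available to you: since $t_2>t_1$, the second union of caps is contained in the first, so a single union over $n-1$ caps at threshold $t_1$ already suffices.
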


\begin{proof}
Let $\|\cdot\|$ be the norm on $\mathbb{R}^{n-1}$ such 
that $L'$ is the unit ball that corresponds to the norm 
$\|\cdot\|$. More specifically, for $y\in \mathbb{R}
^{n-1}$,

$$
	\|y\| = \max_{1\le i,j<n \, , \, i\neq j}\{(1-
	\epsilon_n)|y_i|+\sqrt{1-(1-\epsilon_n)^2}|y_j| \}.
$$

Let $g=(g_1,g_2,..,g_{n-1})$ be the standard Gaussian 
random vector in $\mathbb{R}^{n-1}$. Then,

\begin{eqnarray*}
	\mathbb{E}\|g\|&=&\mathbb{E}\max_{1\le i,j<n \, , \,
	 i\neq j}\{(1-\epsilon_n)|g_i|+\sqrt{1-(1-
	 \epsilon_n)^2}|g_j| \}\\
    &\le& 2 \mathbb{E}\max_{i=1,..,n-1}|g_i| \le 
    c'\sqrt{\log(n)},
\end{eqnarray*}
where $c'>0$ is a universal constant and the last 
inequality is a classical result  for the extreme value 
of independent Gaussian random variables.

Using the standard polar integration, we obtain the 
following inequality,
$$ 
	\int_{S^{n-2}} \|\theta\|d\sigma_{n-2}(\theta)
	\le\frac{c''}{\sqrt{n}}\mathbb{E}\|g\| ,
$$
where $c''>0$ is a universal constant. Thus, $
\mathbb{E}_{\sigma_{n-2}}\|\theta\|\le 
c'c''\sqrt{\frac{\log(n)}{n}}$. Moreover, $\sup_{\theta 
\in S^{n-2}}\|\theta\|\le 1$ due to the fact that 
$B_2^{n-1}\subset L'$. Therefore, the function $\theta 
\rightarrow \|\theta\|$ is 1-Lipschitz on 
$S^{n-2}$. We set $C_7>0$ to be small enough so that $
\frac{1}{2}\frac{1}{5\sqrt{\epsilon_n n}}
>c'c''\sqrt{\frac{\log(n)}{n}}$. Since $\rho_{L'}
(\theta)=\frac{1}{\|\theta\|}$, we have the equality 
$$
\{ \theta \in S^{n-2} \, , \, 
    \rho_{L'}(\theta) \le 5\sqrt{\epsilon_n n}\}
    =\{ \theta \in S^{n-2} \, , \,
    \|\theta\| \ge \frac{1}{5\sqrt{\epsilon_n n}}).
$$
Furthermore, the inequality 
$\mathbb{E}_{\sigma_{n-2}}\|\theta\|\le \frac{1}{2}\frac{1}{5\sqrt{\epsilon_n n}}$
implies
$$
\{ \theta \in S^{n-2} \, , \,
    \|\theta\| \ge \frac{1}{5\sqrt{\epsilon_n n}})
 \subset \{\theta \in S^{n-2} \, , \,
    \left|\|\theta\|-\mathbb{E}\|\theta\|\right|>
    \frac{1}{10\sqrt{\epsilon_n n}})\}.
$$
Together with Theorem \ref{DL}, we may conclude that
\begin{eqnarray*}
    &&\sigma_{n-2}(\{ \theta \in S^{n-2} \, , \, 
    \rho_{L'}(\theta) \le 5\sqrt{\epsilon_n n}\})\\
    &\le& \sigma_{n-2}(\{\theta \in S^{n-2} \, , \,
    \left|\|\theta\|-\mathbb{E}\|\theta\|\right|>
    \frac{1}{10\sqrt{\epsilon_n n}})\} \\
    &\le & 4\exp(-\frac{C_8}{\epsilon_n}),
\end{eqnarray*}
where we use Theorem \ref{DL} in the last inequality.
\end{proof}
	

Now we are able to prove Theorem \ref{main theorem 
John's position} (2).

\begin{proof}[Proof of Theorem \ref{main theorem John's 
position} (2)]

We want to choose $\epsilon_n$ so that
\begin{equation}
	\label{EQ2} \sigma_{n-2}(O_2)<\frac{1}{4}  ,
\end{equation}
and

\begin{equation}
	\label{EQ3} \sigma_{n-2}(\{\theta\in S^{n-2}, 
	\rho_{L'}(\theta)\le 5\sqrt{\epsilon_n n}\})\le 
	\frac{1}{4},
\end{equation}
for a large $n$. 

According to Proposition \ref{PropertyofKandL}, the 
first condition can be achieved if $\epsilon_n 
<\frac{c}{\log(n)}$ for some $c>0$ when $n$ is large.

Moreover, we also want to choose $\epsilon_n<\frac{c'}
{\log(n)}$ so that we can apply Proposition 
\ref{rhoLislarge} to get $\sigma_{n-2}(\{\theta\in 
S^{n-2}, \rho_{L'}(\theta)\le 5\sqrt{\epsilon_n n}\})
\le \frac{1}{4}$. Therefore, we can set $
\epsilon_n=\frac{c''}{\log(n)}$ for some $c''>0$ so 
that (\ref{EQ2}) and (\ref{EQ3}) hold.\\

\vspace{.1in}

Recall that from (\ref{EQ1}) our goal is to show that
$$
	\int_{S^{n-2}}\int_{-1}^n\rho_{P}(\theta,t)^{n-1}(t-
	\frac{1}{5}\epsilon_n n)dt d\sigma_{n-2}(\theta)>0.
$$

Since $P=Q\cap L_2$,
$$
	\rho(\theta,t)=\min\{\rho_{Q}(\theta,t),\rho_{L_2}
	(\theta,t)\}=\min\{\rho_{Q}(\theta,t),\rho_{L'}
	(\theta)\}.
$$

We handle the inner integral differently for $\theta 
\in O_2$ and $\theta \notin O_2$.

\begin{itemize}
	\item In the case that  $\theta \notin O_2$:\\
   	First, we have $\rho_{Q}(\theta,\epsilon_n n) \ge
    \rho_{L_2}(\theta,\epsilon_n n)$. Thus,
    $\rho_P(\theta,t)=\rho_{L'}(\theta)$ for $t \in 
    [-1,\epsilon_n n]$. This is because $\rho_{L'}
    (\theta)$ is a constant and $\rho_P(\theta,t)$ is 
    decreasing with respect to $t$.  Thus,
    \begin{eqnarray*}
       &&\int_{-1}^n\rho_P(\theta,t)^{n-1}(t-\frac{1}
       {5}\epsilon_n n)dt\\
       &\ge&\int_{-1}^{\epsilon_n n}
       \rho_P(\theta,t)^{n-1}(t-\frac{1}{5}\epsilon_n
       n)dt\\
       &=& \int_{-1}^{\epsilon_n n}\rho_{L'}
       (\theta)^{n-1}(t-\frac{1}{5}\epsilon_n n)dt.\\
    \end{eqnarray*}
    We split the integral to two parts:
\begin{eqnarray*}
&\int_{-1}^{\epsilon_n n}\rho_{L'}
       (\theta)^{n-1}(t-\frac{1}{5}\epsilon_n n)dt\\
       =&
  \int_{-1}^{2\frac{1}{5}\epsilon_n
       n+1}\rho_{L'}(\theta)^{n-1}(t-\frac{1}
       {5}\epsilon_n n)dt\\
       &+\int_{2\frac{1}{5}\epsilon_n 
       n+1}^{\epsilon_n n}\rho_{L'}(\theta)^{n-1}(t-
       \frac{1}{5}\epsilon_n n)dt.\\
\end{eqnarray*}
Due to the symmetry of the integrand with respect to 
$t=\frac{1}{5}\epsilon_n n$, the first summand is $0$. For the 
second summand, we have
\begin{eqnarray*}
       &&\int_{\frac{2}{5}\epsilon_n n+1}^{\epsilon_n
       n}\rho_{L'}(\theta)^{n-1}(t-\frac{1}
       {5}\epsilon_n n)dt\\
       &\ge& (\epsilon_n n-\frac{2}{5}\epsilon_n 
       n-1)\rho_{L'}(\theta)^{n-1}(\frac{1}
       {5}\epsilon_n n+1)\\
       &\ge& \frac{(\epsilon_n n)^2}{10}\rho_{L'}
       (\theta)^{n-1},
     \end{eqnarray*}
     where in the second to last inequality we used 
     that $\frac{2}{5}\epsilon_n n +1\le \frac{1}
     {2}\epsilon_n n$ by (\ref{epsilonconstraint}).
	We conclude that
\begin{equation}
	\label{O_2^c}
\forall \theta\in O_2^c \, ,    
\int_{-1}^n\rho_P(\theta,t)^{n-1}(t-\frac{1}{5}\epsilon_n n)dt \ge \frac{(\epsilon_n n)^2}{10}\rho_{L'}
       (\theta)^{n-1}.
\end{equation}
\vspace{.1in}
%
%
     \item  In the case that $\theta \in O_2$:\\
     From Proposition \ref{PropertyofKandL}, we know 
     that $\rho_{Q}(\theta,\epsilon_n n)\le 
     2\sqrt{\epsilon_n n}$. Therefore, since $
     \rho_{Q}(\theta, t)$ is linear on $[-1,n]$ and $
     \rho_{Q}(\theta,n)=0$, we see that for any 
     $t\in[-1,n]$,
            
     $$
     		\rho_{Q}(\theta,t)\le \frac{n+1}{n-\epsilon_n 
     		n}2\sqrt{\epsilon_n n} \le 4\sqrt{\epsilon_n 
     		n},
     $$
     for a sufficiently large $n$. We have 
     
     \begin{eqnarray*}
	     \int_{-1}^n\rho_P(\theta,t)^{n-1}(t-\frac{1}
	     {5}\epsilon_n n)dt
	     \ge\int_{-1}^{\frac{1}{5}\epsilon_n
	     n}\rho_P(\theta,t)^{n-1}(t-\frac{1}
	     {5}\epsilon_n n)dt,
     \end{eqnarray*}
     because the integrand is positive for $t>\frac{1}{5}\epsilon_n n$. 
     Then, using the estimate of $\rho_Q(\theta,t)\le 4\sqrt{\epsilon_n n}$, 
   
     \begin{eqnarray*}
     &&\int_{-1}^{\frac{1}{5}\epsilon_n
	     n}\rho_P(\theta,t)^{n-1}(t-\frac{1}
	     {5}\epsilon_n n)dt\\
	     &\ge&-(\frac{1}{5}\epsilon_n n+1)
	     (4\sqrt{\epsilon_n n})^{n-1}(1+\frac{1}
	     {5}\epsilon_n n) \\
        &\ge& -\frac{4}{25}(\epsilon_n n)^2 
        (4\sqrt{\epsilon_n n})^{n-1},
     \end{eqnarray*}
      where in the last inequality we used
      $\frac{1}{5}\epsilon_n n+1\le \frac{2}
      {5}\epsilon_n n$, which is valid for a large $n$.
      Therefore, we have
      \begin{equation}
      \label{O_2}
\forall \theta\in O_2 \, , \,     
\int_{-1}^n\rho_P(\theta,t)^{n-1}(t-\frac{1}
	     {5}\epsilon_n n)dt \ge 
         -\frac{4}{25}(\epsilon_n n)^2 
        (4\sqrt{\epsilon_n n})^{n-1}.
      \end{equation}
\end{itemize}

Now we are able to derive the main inequality.
\begin{eqnarray}   
\label{mainstep1}\begin{split}
	&&\int_{S^{n-2}}\int_{-1}^n\rho_P(\theta,t)^{n-1}(t-
	\frac{1}{5}\epsilon_n n)dt d\sigma_{n-2}(\theta)\\
   &=& \int_{O_2}\int_{-1}^n\rho_P(\theta,t)^{n-1}(t-
   \frac{1}{5}\epsilon_n n)dt d\sigma_{n-2}(\theta)\\
   &&+\int_{O_2^c}\int_{-1}^n\rho_P(\theta,t)^{n-1}(t-
   \frac{1}{5}\epsilon_n n)dtd\sigma_{n-2}(\theta).
   \end{split}
\end{eqnarray}

Applying (\ref{O_2^c}), the second summand satisfies
$$
\int_{O_2^c}\int_{-1}^n\rho_P(\theta,t)^{n-1}(t-
   \frac{1}{5}\epsilon_n n)dtd\sigma_{n-2}(\theta)
\ge  (\epsilon_n n)^2 \int_{O_2^c}\frac{1}
   {10}\rho_{L'}(\theta)^{n-1}d\sigma_{n-2}(\theta).
$$

Let $U:=\{\theta \, , \, \rho_{L'}(\theta) \ge 
5\sqrt{\epsilon_n n}\}$. From (\ref{EQ2}) and 
(\ref{EQ3}) we know that $\sigma_{n-2}(U\cap O_2^c)\ge 
\frac{1}{2}$ for a large $n$. Since the integrand is positive,
$$
\int_{O_2^c}\frac{1}{10}\rho_{L'}(\theta)^{n-1}d\sigma_{n-2}(\theta)\ge 
\int_{U\cap O_2^c}\frac{1}{10}\rho_{L'}(\theta)^{n-1}d\sigma_{n-2}(\theta).
$$
Thus, 
$$
\int_{O_2^c}\int_{-1}^n\rho_P(\theta,t)^{n-1}(t-
   \frac{1}{5}\epsilon_n n)dtd\sigma_{n-2}(\theta)
   \ge (\epsilon_n n)^2 \int_{U\cap O_2^c}\frac{1}{10}\rho_L(\theta)^{n-1}d\sigma_{n-2}(\theta).
$$

For the first summand of (\ref{mainstep1}), we apply
(\ref{O_2}) and  (\ref{EQ2}) to get 
\begin{eqnarray*}
&&\int_{O_2}\int_{-1}^n\rho_P(\theta,t)^{n-1}(t-
   \frac{1}{5}\epsilon_n n)dt d\sigma_{n-2}(\theta)\\
&\ge&   -(\epsilon_n n)^2\sigma_{n-2}(O_2)\frac{4}{25}
   (4\sqrt{\epsilon_n n})^{n-1}.\\
\end{eqnarray*}

Combining the inequalities for the two summands together we have

\begin{eqnarray*}
&&\int_{S^{n-2}}\int_{-1}^n\rho_P(\theta,t)^{n-1}(t-
	\frac{1}{5}\epsilon_n n)dt d\sigma_{n-2}(\theta)\\
   &\ge&(\epsilon_n n)^2 [\frac{1}{20}
   (5\sqrt{\epsilon_n n})^{n-1}-\frac{1}{25}
   (4\sqrt{\epsilon_n n})^{n-1}]\\
    &\ge& 0.
\end{eqnarray*}
Therefore, the center 
of mass is at least $C_1\frac{n}{\log(n)}$ away from 
$0$, where $C_1:=\frac{c''}{5}$.

\end{proof}


\section{The relation between the conjectures}

Let $K\subset \mathbb{R}^n$ be a convex body in John's 
position and $X$ be a random vector uniformly 
distributed in $K$. Let $M_K$ denote the median of $|X|
$, which is the unique value satisfying

$$
	\mathbb{P}( |X|\le M_K)=\frac{1}{2}.
$$

\begin{lemma} \label{moment and expectation}
	Let $K\subset \mathbb{R}^n$ be a convex body. Let 
	$X$ be a random vector uniformly distributed in $K$. 
	Let $M_K$ denote the median of $|X|$. Then, we have
   
   $$
    	\frac{M_K}{\sqrt{2}} \le (\mathbb{E}|X|^2)^{1/2} \le 
    	C_9 M_K,
   $$
   where $C_9>0$ is a universal constant.
\end{lemma}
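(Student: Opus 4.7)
The plan is to handle the two directions of the inequality separately. The lower bound is an immediate application of the definition of the median, while the upper bound uses the log-concavity of the uniform measure on $K$ together with Borell's lemma.

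For the lower bound, since $X$ has a density (uniform on the convex body $K$), the CDF of $|X|$ is continuous and $\mathbb{P}(|X|\ge M_K) = 1/2$. Hence
\[
\mathbb{E}|X|^2 \ge M_K^2\cdot\mathbb{P}(|X|\ge M_K) = \frac{M_K^2}{2},
\]
yielding $M_K/\sqrt{2}\le (\mathbb{E}|X|^2)^{1/2}$ directly.

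For the upper bound, the obstacle is that Borell's lemma (Theorem~\ref{borell's lemma}) requires the reference set to have measure strictly greater than $1/2$, whereas $\{|x|\le M_K\}$ sits precisely at $1/2$. The plan is therefore to work with a slightly enlarged radius $M_0$ defined by $\mathbb{P}(|X|\le M_0) = 3/4$ (existence again by continuity of the CDF). To this symmetric convex body $U = \{|x|\le M_0\}$ Borell's lemma applies with $\delta = 3/4$, giving for $t>1$ the tail bound
\[
\mathbb{P}(|X| > tM_0) \le \tfrac{3}{4}(1/3)^{(t+1)/2}.
\]
Writing $\mathbb{E}|X|^2 = \int_0^\infty 2s\,\mathbb{P}(|X|>s)\,ds$, splitting the integral at $s = M_0$, and changing variables $s = tM_0$ in the tail, one gets $\mathbb{E}|X|^2 \le C\,M_0^2$ for a universal constant $C$.

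The remaining — and main — step is to compare $M_0$ with $M_K$. This will follow from Theorem~\ref{logconcave 2} applied to the symmetric convex set $U = \{|x|\le M_0\}$ (which has measure $b = 3/4$) with dilation factor $t = M_K/M_0 \in (0,1]$. Since $tU = \{|x|\le M_K\}$ has measure $1/2$, the inequality $\mu(tU)\le C_{3/4}\,t\,\mu(U)$ rearranges to $M_0 \le \tfrac{3}{2}C_{3/4}\,M_K$. Combining this with $\mathbb{E}|X|^2\le C M_0^2$ yields $(\mathbb{E}|X|^2)^{1/2}\le C_9 M_K$. The main conceptual hurdle is precisely the passage from a statement at level $1/2$ (where Borell degenerates) to one at a strictly higher level: Theorem~\ref{logconcave 2} is the exact quantitative comparison of concentric dilates of a symmetric convex set needed to bridge this gap, and without it the median-based estimate at $M_K$ cannot be upgraded to a moment bound.
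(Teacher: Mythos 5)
Your proposal is correct and follows essentially the same route as the paper: the lower bound via $\mathbb{P}(|X|\ge M_K)=1/2$, and the upper bound by fixing an intermediate radius at a level strictly above $1/2$ (you use $3/4$, the paper uses $2/3$), applying Borell's lemma at that level to bound $\mathbb{E}|X|^2$, and then using Theorem~\ref{logconcave 2} to transfer that radius down to the median $M_K$. The choice of intermediate level is immaterial, so this is the paper's argument.
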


\begin{proof}

The first inequality is standard:
$$
	\mathbb{E}|X|^2\ge \mathbb{E}(|X|^21_{|X|\ge M_K})\ge \frac{1}{2}M_K^2
$$
Thus, the first inequality can be obtained by taking square root on both sides.

To prove the second 
one, let $R$ be the number such that $\mathbb{P}(|X|\le 
R)=\frac{2}{3}$. We can apply Theorem \ref{borell's 
lemma} with $U=R B_2^n$ and $\delta=\frac{2}{3}$ to get

$$
 	\mathbb{P}( |X|>t R)\le \frac{\sqrt{2}}{3} 2^{-t/2} 
 	\mbox{ for } t>1.
$$

A simple integration shows that
     
$$
	\mathbb{E}|X|^2 \le c R^2,
$$

for a universal constant $c>0$. \\

Now we apply Theorem \ref{logconcave 2} with 
$b=\frac{2}{3}$ and $U=R B_2^n$ to obtain

$$
  	\mathbb{P}( |X|\le M_KB_2^N)) \le C_b \frac{M_K}{R} 
  	\mathbb{P}( |X|\le R),
$$

which implies that $M_K\ge \frac{3}{C_{2/3}}R$. 

\end{proof}

We could also relate $\mathbb{E}|X|^2$ and the center 
of mass of $K$, $x_K$, when $K$ is in John's position. 

\begin{lemma} \label{second moment and center of mass}
	There exists $C_{10},C_{11}>0$ such that, for any 
	convex body $K \subset \mathbb{R}^n$ in John's 
	position, we have
   $$
    	|x_K|^2\le \mathbb{E}|X|^2\le  C_{10}|x_K|
    	^2+C_{11}n,
	$$
    where $X$ is a random vector uniformly distributed 
    in $K$.
\end{lemma}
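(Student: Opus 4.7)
The plan is to split the second moment as
\[
\mathbb{E}|X|^2 = |x_K|^2 + \mathbb{E}|X-x_K|^2,
\]
which immediately yields the lower bound $\mathbb{E}|X|^2 \ge |x_K|^2$, so the remaining task is to show $\mathbb{E}|X-x_K|^2 \le C|x_K|^2 + C'n$ for universal constants $C, C'$.

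To control the centered second moment I would invoke John's decomposition. Since $K$ is in John's position, Theorem \ref{JP} produces contact points $u_1,\dots,u_m \in S^{n-1}$ and weights $c_1,\dots,c_m > 0$ with $\sum_i c_i u_i \otimes u_i = I_n$ and $\sum_i c_i u_i = 0$; taking traces gives $\sum_i c_i = n$. Pairing the first identity against $X - x_K$ produces
\[
|X-x_K|^2 = \sum_{i=1}^m c_i \langle u_i, X-x_K\rangle^2,
\]
and taking expectations gives $\mathbb{E}|X-x_K|^2 = \sum_i c_i \text{Var}\langle u_i, X\rangle$.

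The key step is then to estimate $\text{Var}\langle u_i, X\rangle$ by using the fact that each $u_i$ is a contact point of $K$ and $B_2^n$, so $u_i$ defines a supporting hyperplane and $\langle u_i, x\rangle \le 1$ for every $x \in K$. Consequently $Y_i := 1 - \langle u_i, X\rangle$ is a nonnegative one-dimensional log-concave random variable with $\mathbb{E} Y_i = 1 - \langle u_i, x_K\rangle$. Applying Theorem \ref{logconcave and seminorm} to the pushforward distribution of $Y_i$ on $\mathbb{R}$ with the seminorm $f(y) = |y|$ and $p = 1$, $q = 2$ gives
\[
\text{Var}\langle u_i, X\rangle = \text{Var}(Y_i) \le \mathbb{E} Y_i^2 \le 4 C_5^2 (\mathbb{E} Y_i)^2 = 4 C_5^2 (1 - \langle u_i, x_K\rangle)^2.
\]

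Finally I would sum these bounds against the weights $c_i$ and expand the square. The cross term vanishes because $\sum_i c_i u_i = 0$, and the quadratic term equals $\sum_i c_i \langle u_i, x_K\rangle^2 = \langle I_n x_K, x_K\rangle = |x_K|^2$, so
\[
\mathbb{E}|X-x_K|^2 \le 4 C_5^2 \bigl( n + |x_K|^2 \bigr),
\]
which yields the claim with $C_{10} = 1 + 4 C_5^2$ and $C_{11} = 4 C_5^2$. The only subtlety I anticipate is verifying that the one-dimensional pushforward of the uniform measure on $K$ under $x \mapsto \langle u_i, x\rangle$ is a non-degenerate log-concave measure so that Theorem \ref{logconcave and seminorm} applies, but this is harmless: $K$ has nonempty interior and $u_i$ is a unit vector, so $Y_i$ has a non-trivial log-concave density on $\mathbb{R}$, and since $Y_i \ge 0$ almost surely the seminorm $|y|$ coincides with $Y_i$ under the expectation.
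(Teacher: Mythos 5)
Your proof is correct, and it takes a somewhat different route than the paper, though both hinge on the same two ingredients: John's decomposition and the $L^2$--$L^1$ moment comparison for one-dimensional log-concave marginals (Theorem \ref{logconcave and seminorm}). The paper works uncentered: it bounds $\mathbb{E}|\langle X,u_i\rangle|$ in terms of $|\langle x_K,u_i\rangle|$ using Jensen together with the elementary observation that any random variable $Y\le 1$ satisfies $\mathbb{E}|Y|=2\mathbb{E}\max\{Y,0\}-\mathbb{E}Y\le 2+|\mathbb{E}Y|$, and only then invokes the moment comparison to pass from first to second moments. You instead center first via the bias--variance identity $\mathbb{E}|X|^2=|x_K|^2+\mathbb{E}|X-x_K|^2$, express $\mathbb{E}|X-x_K|^2$ as $\sum_i c_i\,\mathrm{Var}\langle u_i,X\rangle$, and then apply the moment comparison to the nonnegative shifted variable $Y_i=1-\langle u_i,X\rangle$; the point of centering is that the cross term $\sum_i c_i\langle u_i,x_K\rangle$ vanishes by $\sum c_iu_i=0$, so the weighted sum collapses exactly to $n+|x_K|^2$ and the elementary trick becomes unnecessary. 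Your subtlety check is the right one and is harmless for the reason you give: the pushforward of the uniform measure on $K$ under the affine map $x\mapsto 1-\langle u_i,x\rangle$ is a non-degenerate log-concave probability measure on $\mathbb{R}$, and $|y|$ is a seminorm, so Theorem \ref{logconcave and seminorm} applies with $p=1$, $q=2$. Net effect: same tools, cleaner bookkeeping, explicit constants $C_{10}=1+4C_5^2$, $C_{11}=4C_5^2$.
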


This result was proved by M.~Fradelizi, G.~Paouris and 
C.~Schütt in \cite{ Simplices in the euclidean ball}. 

Here we present a different proof.

\begin{proof}

Since $K$ is in John's position, there exists $\{u_i\}
_{i=1}^m\subset S^{n-1}$ and $\{c_i\}_{i=1}^m$ with 
$c_i>0$ such that $\sum_{i=1}^m c_iu_i=0$ and $
\sum_{i=1}^m c_i u_i \otimes u_i=I_n$.

In particular, $$\mathbb{E}|X|^2=\mathbb{E}\sum_{i=1}^m 
c_i(\langle X,u_i\rangle)^2.$$ Also, $|x_K|
^2=\sum_{i=1}^m c_i (\langle x_K,u_i\rangle)^2$.

Given that $u_i$ is a contact point of $K$, we have $
\langle x,u_i\rangle\le 1$ for all $x\in K$. As a 
consequence, with $\langle x_K,u_i\rangle=\mathbb{E}
\langle X,u_i\rangle$ we have

$$ 
	0\le \mathbb{E}|\langle X,u_i\rangle|-|\langle 
	x_K,u_i\rangle| \le 2.
$$

Here, the first inequality follows from Jensen's 
inequality while the second one relies on an elementary  
observation that for any random variable $Y \le 1$, $
\mathbb{E} |Y| =2 \mathbb{E} \max\{Y,0\} -\mathbb{E} Y
\le 2+ |\mathbb{E} Y|$. Thus, 
$$
	|\langle x_K,u_i\rangle|^2\le (\mathbb{E}|\langle
    X,u_i\rangle|)^2\le 3(|\langle x_K,u_i\rangle|
    ^2+2).
$$

According to Theorem \ref{logconcave and seminorm}, we 
have

$$
   \mathbb{E}|\langle X,u_i\rangle|\le (\mathbb{E}|
   \langle X,u_i\rangle|^2)^{1/2}\le 2C_5 \mathbb{E}|
   \langle X,u_i\rangle|.
$$

Therefore, we can conclude that
$$
 	|x_K|^2\le \mathbb{E}|X|^2\le 12C_5(|x_K|^2+2\sum 
 	c_i)\le C|x_K|^2+C'n,
$$
where the last inequality uses the fact that $
\sum_{i=1}^m c_i=n$.

\end{proof}

\begin{cor} \label{relation between conj 1 and conj 2}
	Conjecture \ref{conj2} and Conjecture \ref
    {conj1} are equivalent.
\end{cor}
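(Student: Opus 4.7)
The plan is to reduce both conjectures to equivalent quantitative statements for a convex body $K$ in John's position and then bridge them via the second moment $\mathbb{E}|X|^2$ of a uniform random vector $X$ on $K$.

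First I would observe that both conjectures are invariant under affine transformations: the John ellipsoid, its dilate about its own center, the center of mass, and the half-volume condition all transform covariantly, so we may assume $K$ is in John's position with $B_K = B_2^n$ centered at the origin. In this normalization Conjecture~\ref{conj1} becomes the assertion that there is a universal constant $C$ such that $\mathbb{P}(|X|\le C\sqrt{n})\ge \tfrac{1}{2}$, i.e.\ $M_K \le C\sqrt{n}$ in the notation of Lemma~\ref{moment and expectation}. Similarly Conjecture~\ref{conj2} becomes $|x_K|\le C\sqrt{n}$.

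Next, I would apply Lemma~\ref{moment and expectation} to replace the median by the second moment: it gives $M_K \asymp (\mathbb{E}|X|^2)^{1/2}$ up to universal constants, so Conjecture~\ref{conj1} is equivalent to the statement $(\mathbb{E}|X|^2)^{1/2} = O(\sqrt n)$ uniformly over all $K$ in John's position. Then I would invoke Lemma~\ref{second moment and center of mass}, which asserts
\[
    |x_K|^2 \le \mathbb{E}|X|^2 \le C_{10}|x_K|^2 + C_{11}n.
\]
The left inequality shows that $\mathbb{E}|X|^2 = O(n)$ forces $|x_K| = O(\sqrt n)$, giving Conjecture~\ref{conj1}$\Rightarrow$Conjecture~\ref{conj2}. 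The right inequality shows that if $|x_K|\le C\sqrt n$, then $\mathbb{E}|X|^2 \le (C_{10}C^2+C_{11})n$, giving the reverse implication. Combining the two equivalences yields Corollary~\ref{relation between conj 1 and conj 2}.

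The only nontrivial point is the first step — verifying that the two conjectures as stated actually translate, in John's position, into $M_K = O(\sqrt n)$ and $|x_K| = O(\sqrt n)$ respectively — but once $B_K$ is the unit Euclidean ball this is just unpacking definitions. The rest is a direct two-line combination of the previously established Lemmas~\ref{moment and expectation} and \ref{second moment and center of mass}, with no further analytic content required.
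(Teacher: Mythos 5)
Your proof is correct and follows essentially the same approach as the paper: reduce to John's position by affine invariance, translate the two conjectures into $M_K = O(\sqrt n)$ and $|x_K| = O(\sqrt n)$ respectively, and bridge them through $(\mathbb{E}|X|^2)^{1/2}$ using Lemma~\ref{moment and expectation} and Lemma~\ref{second moment and center of mass}. One small point in your favor: the paper's own write-up of this corollary appears to have inadvertently swapped the labels ``Conjecture~\ref{conj1}'' and ``Conjecture~\ref{conj2}'' in both halves of the argument (it writes ``Suppose Conjecture~\ref{conj1} is true'' and then invokes $|x_K|\le C\sqrt n$, which is the content of Conjecture~\ref{conj2}, and symmetrically in the converse), whereas your version keeps each implication attached to the correct conjecture.
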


\begin{proof}
	Let $K\subset \mathbb{R}^n$ be a convex body. 
	Since the result is invariant under affine 
	transformations, we may assume that $K$ is in John's 
	position. Let $X$ be a random vector uniformly 
	distributed in $K$ and $M_K$ be the median of the 
	random variable $|X|$. \\
	
	Suppose Conjecture \ref{conj1} is true. There exists 
	a universal constant $C>0$ such that $|x_K|\le 
	C\sqrt{n}$. According to Lemma \ref{moment and 
	expectation} and Lemma \ref{second moment and center 
	of mass}, 

	\begin{eqnarray*}
		M_K & \le & \sqrt{2}(\mathbb{E}|X|^2)^{1/2}\\
		& \le &  \sqrt{2}\sqrt{C_{10}|x_K|^2+C_{11}n}\\
		& \le &  \sqrt{2n}\sqrt{C_{10}C^2+C_{11}}.
	\end{eqnarray*}
	
	This argument is valid for any convex body $K$; 
	therefore, Conjecture \ref{conj2} is true. \\

	On the other hand, assuming Conjecture \ref{conj2} 
	is valid, there exists a universal constant $C>0$ such 
	that $M_K \le C\sqrt{n}$. Again, according to Lemma 
	\ref{moment and expectation} and Lemma \ref{second 
	moment and center of mass}, 
	
	\begin{eqnarray*}
     |x_K| &\le & (\mathbb{E}|X|^2)^{1/2}\\
     &\le & 	C_9M_K\\
     &\le & C_9C\sqrt{n}.
	\end{eqnarray*}

	Therefore, Conjecture \ref{conj1} is true. 
\end{proof}

The examples in Corollary \ref{main cor} will be 
examples $K,P$, which are constructed in Theorem \ref{main theorem 
John's position}. For Corollary \ref{main cor}(2), the 
result will follow by $|x_P|\le C_9M_P$. Corollary 
\ref{main cor}(1) is a more delicate situation, and so 
the same argument does not apply. Observe that, for $R>0$, 
$$K\cap RB_2^{n} \subset K\cap \{x\in \mathbb{R}^n, \langle x,e_1\rangle \le R \}.$$

It is sufficient to show a stronger statement:
$$
\text{vol}( K\cap \{x\in \mathbb{R}^n, \langle x,e_n\rangle \le R \}) \le \text{vol}( K\cap \{x\in \mathbb{R}^n, \langle x,e_1\rangle > R \})
$$
for $R=n-C_0'\sqrt{\log(n)n}$. 
Adapting the notations from the proof, this is equivalent to show 
$$
	\int_{S^{n-1}}\int_{-1}^n 
	\rho_K(\theta,t)^{n-1}\text{sign}(t-
	R)dtd\sigma_{n-2}>0.
$$ The proof of this statement is almost 
identical to the proof of Theorem \ref{main 
theorem John's position}(1).

\end{flushleft}

\section{Acknowledgement}
I thank my advisor, Professor Mark Rudelson, for 
discussing and providing advice about this problem. I 
also thank Professor Santosh Vempala for identifying 
the problem and explaining its computer science 
background.


\begin{thebibliography}{99}
\bibitem{AGA}
S.~Artstein-Avidan, A.~A.~Giannopoulos, and V.~D.~Milman, Asymptotic
geometric analysis. Part I. Mathematical Surveys and Monographs, 202.
American Mathematical Society, Providence, RI, 2015.

\bibitem{Probability}
E.~Çınlar, 
Probability and stochastics. 
Graduate texts in mathematics, vol 261. Springer, New York, 2011.

\bibitem{Simplices in the euclidean ball}
M.~Fradelizi, G.~Paouris, C.~Schütt, Simplices in the Euclidean ball. (English summary) Canad. Math. Bull. 55(2012), 498$-$508

\bibitem{mie}
L.~G.~Khachiyan, Rounding of polytopes in the real number model of computation. Mathematics of Operations Research, 21:307$-$320, 1996. 

\bibitem{John}
F.~John, Extremum problems with inequalities as subsidiary conditions, Studies and
Essays Presented to R. Courant on his 60th Birthday, January 8, 1948, Interscience
Publishers, Inc., New York, N. Y., 1948, pp. 187–204.

\bibitem{On the equivalence between geometric and arithmetic means for log-concave measures}
R.~Latala, On the equivalence between geometric and arithmetic means for log-concave measures. (English summary) Convex geometric analysis (Berkeley, CA, 1996), 123$-$127,
Math. Sci. Res. Inst. Publ., 34, Cambridge Univ. Press, Cambridge, 1999.

\bibitem{Efficient Inverse Maintenance and Faster Algorithms for Linear Programming. }
Y.~Lee, A.~Sidford, 
Efficient Inverse Maintenance and Faster Algorithms for Linear Programming. 
56th IEEE Annual Symposium on Foundations of Computer Science, FOCS 2015, Berkeley, CA, USA, October 18$-$20, 2015. 




\bibitem{D}
V.~D.~Milman, G.~Schechtman,  Asymptotic theory of finite-dimensional
normed spaces. With an appendix by M.~Gromov. Lecture Notes in Mathematics,
1200. Springer-Verlag, Berlin, 1986.

 \end{thebibliography}
\end{document}